\newtheorem{thm}{Theorem}
\newtheorem{cor}[thm]{Corollary}
\newtheorem{lem}[thm]{Lemma}
\newtheorem{fact}[thm]{Fact}
\newtheorem{defn}[thm]{Definition}
\theoremstyle{definition}
\newcommand{\con}{\smallfrown}
\newcommand{\meg}{\geqslant}
\newcommand{\mik}{\leqslant}
\newcommand{\mikw}{\leqslant^\mathrm{w}}
\newcommand{\pred}{\mathrm{Pred}}
\newcommand{\suc}{\mathrm{Succ}}
\newcommand{\imsuc}{\mathrm{ImmSuc}}
\newcommand{\ws}{\mathrm{ws}}
\newcommand{\w}{\mathrm{W}}
\newcommand{\xx}{\mathbf{x}}
\newcommand{\yy}{\mathbf{y}}
\newcommand{\ttt}{\mathbf{t}}
\newcommand{\sss}{\mathbf{s}}
\newcommand{\SSS}{\mathbf{S}}
\newcommand{\tp}{\mathrm{top}}
\newcommand{\bt}{\mathrm{bot}}
\begin{document}
\title{A disjoint union theorem for trees}
\author{Stevo Todorcevic}
\author{Konstantinos Tyros}

\address{Department of Mathematics, University of Toronto, Toronto, Canada, M5S 2E4. Institut de Math\'ematiques de Jussieu, UMR 7586, 2 pl. Jussieu, case 7012, 75251 Paris Cedex 05, France. }
\email{stevo@math.toronto.edu, stevo.todorcevic@imj-prg.fr}
\address{and}
\address{Department of Mathematics, University of Toronto, Toronto, Canada, M5S 2E4 }
\email{ktyros@math.toronto.edu}

\thanks{2000 \textit{Mathematics Subject Classification}: 05D10.}
\thanks{\textit{Key words}: Disjoint Union Theorem, Hales-Jewett Theorem, Dual Ramsey Theory, Trees, Halpern-L\"{a}uchli theorem, Level products.}
\thanks{Partially suported by grants from NSERC and CNRS}
%\thanks{Research supported by grants from NSERC and CNRS}

\maketitle

\begin{abstract}
  We prove an infinitary disjoint union theorem for level products of trees. To implement the proof we develop a Hales--Jewett type result for words indexed by a level product of trees.
\end{abstract}

\section{Introduction}

The finite disjoint union theorem, also known as Folkman's theorem (see \cite{GRS}; p.82), is the following statement.

\begin{thm}[ Folkman]
For every pair of positive integers $k$ and $c$ there is an integer $F=F(k,c)$ such that for every $c$-coloring of the power-set
$\mathcal{P}(X)$ of some set $X$ of cardinality $\geq F,$ there is a family $\mathcal{D}=(D_i)_{i=1}^k$ of pairwise disjoint nonempty subsets of $X$
such that the family
\[\Big\{\bigcup_{i\in I} D_i: \emptyset\neq I\subseteq \{1,2,...,k\}\Big\}\]
of unions is monochromatic.
\end{thm}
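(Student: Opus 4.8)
The plan is to reduce the statement to the Hales--Jewett theorem by identifying the power-set $\mathcal{P}(X)$ with the cube $\{0,1\}^X$ via indicator functions, so that a subset $S$ becomes the word $\mathbf{1}_S$. Under this identification, the family of unions $\{\bigcup_{i\in I} D_i : I \subseteq \{1,\dots,k\}\}$ attached to pairwise disjoint sets $D_1,\dots,D_k$ is exactly a \emph{combinatorial subspace} of dimension $k$ in which the coordinates of $D_i$ form the $i$-th block of moving coordinates (all set to $1$ or all to $0$ together) and every remaining coordinate is pinned at $0$. Thus a monochromatic $k$-dimensional combinatorial subspace whose constant part is the all-zero word would furnish the desired $\mathcal{D}=(D_i)_{i=1}^k$ at once, which suggests taking $F(k,c)$ comparable to the multidimensional Hales--Jewett number for a two-letter alphabet, $k$ dimensions, and $c$ colours.

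First I would invoke the multidimensional Hales--Jewett theorem, which follows from the one-variable version applied over the product alphabet $\{0,1\}^d$, to obtain, for $N=N(k,c)$ large enough, a monochromatic $k$-dimensional combinatorial subspace of $\{0,1\}^{[N]}$ for the given colouring. Reading this subspace back through the indicator identification produces pairwise disjoint nonempty blocks $D_1,\dots,D_k$ together with a fixed ``root'' set $C$, namely the coordinates pinned to $1$, disjoint from all the $D_i$, such that every set of the form $C\cup\bigcup_{i\in I}D_i$ with $\emptyset\neq I\subseteq\{1,\dots,k\}$ receives one and the same colour.

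The main obstacle is precisely this root: the statement demands that the \emph{unrooted} unions $\bigcup_{i\in I}D_i$ be monochromatic, whereas a single application of Hales--Jewett only controls their translates $C\cup\bigcup_{i\in I}D_i$, and the colour of a set containing $C$ says nothing a priori about the same set with $C$ deleted. To remove the root I would argue by compactness: a putative failure of the statement for every $N$ yields, via a standard diagonalisation on the colourings in $\{1,\dots,c\}$ of the finite subsets of $\mathbb{N}$, a single $c$-colouring admitting no monochromatic $k$-term Folkman family; but the infinite finite-unions theorem (Hindman's theorem) produces an infinite sequence of pairwise disjoint sets all of whose finite unions are monochromatic, and restricting to its first $k$ members gives exactly such a family, root-free, because Hindman's theorem is. This contradiction delivers a finite bound $F(k,c)$. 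Alternatively one can finitise the idempotent-ultrafilter proof of Hindman's theorem directly, peeling off the blocks $D_k,D_{k-1},\dots,D_1$ one at a time so that adjoining each new block preserves colour; keeping the bookkeeping of that iteration under control is where the real work lies.
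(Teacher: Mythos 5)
The paper does not prove this statement at all: Folkman's theorem is quoted as classical background with a citation to Graham--Rothschild--Spencer, so there is no in-paper argument to compare against. Judged on its own, your proposal does contain a complete and correct proof, but only via its second route. You are right that the naive reduction to the multidimensional Hales--Jewett theorem over $\{0,1\}$ stalls: a monochromatic $k$-dimensional combinatorial subspace carries a ``root'' $C$ of coordinates pinned at $1$, and nothing forces $C=\emptyset$, so that step alone does not finish. Your compactness-plus-Hindman argument does finish: a failure for every $N$ yields, by K\"onig's lemma on the product space of $c$-colourings of the finite subsets of $\mathbb{N}$, a single colouring with no monochromatic $k$-term Folkman family, and the finite-unions form of Hindman's theorem then produces infinitely many pairwise disjoint sets with all finite unions monochromatic, of which the first $k$ give the contradiction. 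This is logically sound, but it is worth noting what it costs: it derives a finite, effectively boundable statement from a strictly stronger infinitary theorem (historically the implication runs the other way --- Folkman's theorem predates Hindman's), and the compactness step destroys any bound on $F(k,c)$. The standard proof, the one in the cited reference, is an elementary iterated pigeonhole/colour-focusing argument: one builds blocks $D_1,\dots,D_m$ so that the colour of $\bigcup_{i\in I}D_i$ depends only on $\min I$, then pigeonholes on $m\geq c(k-1)+1$ indices to extract $k$ with a common ``min-colour''; this is essentially the ``peeling off blocks one at a time'' alternative you gesture at but do not carry out. So: accept the Hindman route as a valid proof of existence of $F(k,c)$, but be aware it is heavy machinery for the job and gives no quantitative information.
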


It is well known that the infinite form of this result must have some restriction on colorings.
In fact, an essentially optimal restriction is given by the following result of T. J. Carlson and S. G. Simpson  given  in \cite{CS} as a consequence of their infinite version of the dual Ramsey theorem stating that for every Suslin measurable (see Section \ref{section_background_material} for definition) finite coloring of the set $\mathcal{E}^\infty$ of all partitions of the natural numbers into infinitely many parts, there exists a partition $X$ in $\mathcal{E}^\infty$ such that the set of all partitions coarser that $X$ is monochromatic. This has the following consequence
\begin{thm}[Carlson-Simpson]
  \label{CS}
  For every Suslin measurable finite coloring of the powerset of the natural numbers, there is a
  sequence $(X_n)_n$ of pairwise disjoint subsets of the natural numbers such that the set
  \[\Big\{\bigcup_{n\in Y}X_n:\;Y\;\text{is a non-empty subset of the natural numbers}\Big\}\]
  is monochromatic.
\end{thm}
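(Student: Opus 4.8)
The plan is to deduce this statement directly from the Carlson--Simpson partition theorem quoted above, by transporting a coloring of $\mathcal{P}(\mathbb{N})$ to a coloring of the space $\mathcal{E}^\infty$ of partitions of $\mathbb{N}$ into infinitely many classes. Fix a Suslin measurable $c\colon\mathcal{P}(\mathbb{N})\to\{1,\dots,r\}$ and, for $P\in\mathcal{E}^\infty$, write $[0]_P$ for the class of $P$ containing $0$. I would define $\tilde c\colon\mathcal{E}^\infty\to\{1,\dots,r\}$ by $\tilde c(P)=c(\mathbb{N}\setminus[0]_P)$. Since the map $P\mapsto[0]_P$ is Borel (the relation ``$i$ and $j$ lie in the same class of $P$'' is Borel in the natural coding of $\mathcal{E}^\infty$) and $c$ is Suslin measurable, $\tilde c$ is again Suslin measurable, so the partition theorem applies and yields $X\in\mathcal{E}^\infty$ such that every partition coarser than $X$ receives one fixed color $\gamma$.

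Next I would enumerate the classes of $X$ by their least elements as $X^{(0)},X^{(1)},X^{(2)},\dots$, so that $0\in X^{(0)}$, and establish the following key point: for every infinite $K\subseteq\{1,2,\dots\}$ one has $c(\bigcup_{i\in K}X^{(i)})=\gamma$. To see this I would exhibit $\bigcup_{i\in K}X^{(i)}$ as the complement of the class of $0$ in a suitable coarsening of $X$: let $P$ be the partition whose classes are the single class $X^{(0)}\cup\bigcup_{i\notin K}X^{(i)}$ together with the classes $X^{(i)}$ for $i\in K$ kept separate. Then $P$ is coarser than $X$, it has infinitely many classes because $K$ is infinite, hence $P\in\mathcal{E}^\infty$, and $\mathbb{N}\setminus[0]_P=\bigcup_{i\in K}X^{(i)}$; therefore $c(\bigcup_{i\in K}X^{(i)})=\tilde c(P)=\gamma$.

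Finally I would extract the disjoint family. Partition the index set $\{1,2,\dots\}$ into infinitely many infinite pieces $K_0,K_1,K_2,\dots$ and set $X_n=\bigcup_{i\in K_n}X^{(i)}$. These are pairwise disjoint nonempty subsets of $\mathbb{N}$, and for every nonempty $Y\subseteq\mathbb{N}$ the union $\bigcup_{n\in Y}X_n$ equals $\bigcup_{i\in K}X^{(i)}$ with $K=\bigcup_{n\in Y}K_n$; since $Y\neq\emptyset$, $K$ contains some $K_{n_0}$ and is therefore infinite, so by the key point $c(\bigcup_{n\in Y}X_n)=\gamma$, as required.

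I expect the main obstacle to be conceptual rather than computational: a naive reading rule, such as colouring $P$ by $c$ of a single distinguished class (say $[0]_P$ itself), forces every realized set to contain a fixed block of $X$ and so cannot produce the singletons $X_n$, which must be pairwise disjoint yet monochromatic. The two ideas that resolve this are (i) reading the \emph{complement} of $[0]_P$, so that no base class is forced into the sets we colour and every infinite union of classes is attainable as a coarsening, and (ii) fattening each $X_n$ into an infinite union of base classes, so that even a single $X_n$---indeed any nonempty union---is an infinite union of base classes and hence falls within the homogeneous family. The remaining routine point to check is the Suslin measurability of $\tilde c$, which follows from the Borel definability of the class of $0$ together with the stability of Suslin measurability under composition with Borel maps.
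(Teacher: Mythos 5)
Your proposal is correct and follows exactly the route the paper intends: the paper states Theorem \ref{CS} as an unproved consequence of the Carlson--Simpson dual Ramsey theorem for $\mathcal{E}^\infty$, and your argument (reading off the complement of the class of $0$, then fattening each $X_n$ into an infinite union of blocks so that every nonempty union of the $X_n$ is realized as a coarsening with infinitely many classes) is the standard derivation being left implicit. The measurability transfer via the Borel map $P\mapsto[0]_P$ is also handled correctly.
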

In this paper, we view this as a dual form of the classical pigeonhole principle for finite partitions of $\omega$ and we extend it by replacing $\omega$ by any other rooted finitelly branching tree of height $\omega$ with no terminal nodes.
Recall that a most famous pigeonhole principle for trees is the subject of
a deep result due to J. D. Halpern and H. L\"{a}uchli \cite{HL}. In this paper we establish
a dual version of the Halpern--L\"{a}uchli theorem. We prove a result similar to Theorem \ref{CS} where the underling structure is the level product $\otimes\mathbf{T}$ of an infinite vector tree $\mathbf{T}$, that is, a finite sequence of finitely brunching and rooted trees of height $\omega$ with no maximal nodes, instead of the set of the natural numbers (see Section \ref{section_notation_for_trees} for the relevant notation). In particular, we consider the
following subset of the powerset of $\otimes\mathbf{T}$,
\[\mathcal{U}(\mathbf{T})=\{U\subseteq\otimes\mathbf{T}:\;U\;\text{has a minimum}\}.\]
The collection $\mathcal{U}(\mathbf{T})$ can be viewed as a subset of the set $\{0,1\}^{\otimes\mathbf{T}}$ of all
functions from $\otimes\mathbf{T}$ into $\{0,1\}$. We endow the set $\{0,1\}^{\otimes\mathbf{T}}$
with the product topology of the discrete topology on $\{0,1\}$. Then
$\mathcal{U}(\mathbf{T})$ forms a closed subset of $\{0,1\}^{\otimes\mathbf{T}}$.
The subspaces that we consider in this setting are families of pairwise disjoint elements of $\mathcal{U}(\mathbf{T})$ indexed by the level product of a vector subset (we advise the unfamiliar reader with the notions of vector tree, vector subset and dense vector subset to first take a look at Section \ref{section_notation_for_trees}). More precisely, let
$\mathbf{D}$ be a vector subset of $\mathbf{T}$. A $\mathbf{D}$-subspace of $\mathcal{U}(\mathbf{T})$ is a family $\mathbf{U}=(U_\ttt)_{\ttt\in\otimes\mathbf{D}}$ consisting of pairwise disjoint elements from $\mathcal{U}(\mathbf{T})$ such that $\min U_\ttt=\ttt$ for all $\ttt$ in $\otimes\mathbf{D}$. We say that $\mathbf{U}$ is a subspace of $\mathcal{U}(\mathbf{T})$ if it is a $\mathbf{D}$-subspace for some vector subset $\mathbf{D}$ of $\mathbf{T}$, which we denote by $\mathbf{D}(\mathbf{U})$.
For a subspace $\mathbf{U}=(U_\ttt)_{\ttt\in\otimes\mathbf{D}(\mathbf{U})}$
we define its span by the rule
\[[\mathbf{U}]=\Big\{\bigcup_{\ttt\in\Gamma}U_\ttt:\;\Gamma\subseteq\otimes\mathbf{D}(\mathbf{U})\Big\}\cap\mathcal{U}(\mathbf{T}).\]
If $\mathbf{U}$ and $\mathbf{U}'$ are two subspaces of $\mathcal{U}(\mathbf{T})$, we say that $\mathbf{U}'$ is a further subspace of $\mathbf{U}$, we write $\mathbf{U}'\mik\mathbf{U}$, if $[\mathbf{U}']$ is a subset of $[\mathbf{U}]$. Observe that, in particular, this implies that $\mathbf{D}(\mathbf{U}')$ is a vector subset of $\mathbf{D}(\mathbf{U})$.
The main result of this work is the following.
\begin{thm}
  \label{disjoint_Union_tree}
  Let $\mathbf{T}$ be a vector tree of infinite height and $\mathcal{P}$ a Souslin measurable subset of $\mathcal{U}(\mathbf{T})$. Also let $\mathbf{D}$ be a dense %dominating
  vector subset of $\mathbf{T}$ and $\mathbf{U}$ a $\mathbf{D}$-subspace of $\mathcal{U}(\mathbf{T})$.
  Then there exists a subspace $\mathbf{U}'$ of $\mathcal{U}(\mathbf{T})$ with $\mathbf{U}'\mik\mathbf{U}$ such that
  either
  \begin{enumerate}
    \item[(i)]  $[\mathbf{U}']$ is a subset of $\mathcal{P}$ and $\mathbf{D}(\mathbf{U}')$ is a dense %dominating
        vector subset of $\mathbf{T}$, or
    \item[(ii)] $[\mathbf{U}']$ is a subset of $\mathcal{P}^c$ and $\mathbf{D}(\mathbf{U}')$ is a $\ttt$-dense %dominating
        vector subset of $\mathbf{T}$ for some $\ttt$ in $\otimes\mathbf{T}$.
  \end{enumerate}
\end{thm}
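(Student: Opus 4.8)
The plan is to reduce the theorem to the Hales--Jewett type result for words indexed by a level product of trees (the $\mdhj$ theorem) developed above, by coding the span $[\mathbf{U}]$ as a space of words, and then to promote the resulting combinatorial statement to the full Souslin measurable setting through an infinite fusion argument. First I would set up the coding. Fix the $\mathbf{D}$-subspace $\mathbf{U}=(U_\ttt)_{\ttt\in\otimes\mathbf{D}}$. Since the $U_\ttt$ are pairwise disjoint with $\min U_\ttt=\ttt$, every element of $[\mathbf{U}]$ is of the form $\bigcup_{\ttt\in\Gamma}U_\ttt$ for a uniquely determined $\Gamma\subseteq\otimes\mathbf{D}$, and the requirement that this union lie in $\mathcal{U}(\mathbf{T})$ translates into the condition that $\Gamma$ possess a minimum in the order inherited from $\otimes\mathbf{D}$. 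I would thus identify $[\mathbf{U}]$ with a set of $\{0,1\}$-valued words indexed by $\otimes\mathbf{D}$ and view $\mathcal{P}$ as a coloring of these words; a further subspace $\mathbf{U}'\mik\mathbf{U}$ then corresponds exactly to a combinatorial subspace of the word space of the kind governed by $\mdhj$. Under this dictionary a monochromatic combinatorial subspace becomes a monochromatic further subspace, and the two density notions (dense versus $\ttt$-dense) for $\mathbf{D}(\mathbf{U}')$ are read off from the support of the variable words produced.

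Next I would handle the Souslin measurable hypothesis. The family of subspaces of $\mathcal{U}(\mathbf{T})$, ordered by $\mik$ and equipped with its natural finite approximations, carries an Ellentuck-type topology for which Souslin measurable subsets of $\mathcal{U}(\mathbf{T})$ are completely Ramsey; invoking this (or, equivalently, approximating $\mathcal{P}$ by clopen sets along a fusion and absorbing the error) reduces the problem to the case where $\mathcal{P}$ is clopen. For clopen $\mathcal{P}$ the argument is finitary at each stage: I would enumerate the levels of $\mathbf{T}$ and construct a decreasing fusion sequence $\mathbf{U}=\mathbf{U}_0\meg\mathbf{U}_1\meg\cdots$, at the $n$-th step applying $\mdhj$ to stabilize the color of every span that is already decided by the first $n$ levels, and then diagonalizing to obtain the limit subspace $\mathbf{U}'$.

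The dichotomy, with its asymmetry, would come from a density dichotomy driven along this fusion. At each node I would ask whether a further subspace can be chosen so that all of its spans lie in $\mathcal{P}$ while keeping the domain dense in the cone above that node. If this succeeds at the root, the fusion produces alternative (i) with $\mathbf{D}(\mathbf{U}')$ dense. If it fails, the failure must be witnessed locally: there is a node $\ttt$ above which no dense-in-$\mathcal{P}$ extension exists, and a second application of $\mdhj$ above $\ttt$ forces every span into $\mathcal{P}^c$, yielding alternative (ii) with $\mathbf{D}(\mathbf{U}')$ merely $\ttt$-dense.

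The main obstacle I anticipate is precisely reconciling the Hales--Jewett combinatorics with the disjointness and minimum constraints: one must ensure that the combinatorial subspaces delivered by $\mdhj$ correspond to \emph{honest} $\mathbf{D}(\mathbf{U}')$-subspaces --- pairwise disjoint, with the correct minima, and with spans that genuinely have minima --- while simultaneously carrying the density bookkeeping that distinguishes the two alternatives. The asymmetry between (i) and (ii) is not cosmetic: the positive side propagates globally whereas the negative side only localizes, so the crux of the argument is to prove that the failure of a dense extension inside $\mathcal{P}$ is always localized above a single node $\ttt$.
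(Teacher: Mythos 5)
There is a genuine gap. Your proposal correctly identifies the first ingredient --- coding unions of the $U_\ttt$ as $\{0,1\}$-valued words on a level product and applying the word theorem --- but you flag the real crux (``the failure of a dense extension inside $\mathcal{P}$ is always localized above a single node $\ttt$'') as an anticipated obstacle and never resolve it. The paper resolves it by a two-step decomposition that your outline is missing. First, one \emph{canonicalizes the coloring on the minimum}: for each fixed $\ttt\in\otimes\mathbf{D}$, the sets in $[\mathbf{U}]$ with minimum $\ttt$ are in continuous bijection with \emph{all} words in $\w^\infty(\{0,1\},\mathbf{T}',\mathbf{D}')$, where $\mathbf{T}'$ is the cone above $\ttt$ and $\mathbf{D}'$ omits $\ttt$ itself (the map $f\mapsto U_\ttt\cup\bigcup_{\sss\in f^{-1}(\{1\})}U_\sss$); this is how the minimum constraint, which you correctly worry about, is absorbed --- it is not a constraint on the word space at all once the minimum is pinned. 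Applying Corollary \ref{tree_Hales_Jewett_infinite_dominating} cone by cone and fusing over levels (Lemmas \ref{lem_fix_one_min}--\ref{lem_fix_min_full_tree}) yields a subspace $\mathbf{U}''$ on which $c(U)$ depends only on $\min U$. Second, the coloring now descends to a genuine subset $\mathcal{P}_*$ of $\otimes\mathbf{D}(\mathbf{U}'')$, and the dense/$\ttt$-dense dichotomy is then \emph{exactly} the Halpern--L\"auchli theorem (Theorem \ref{Halpern_Lauchli}) applied to $\mathcal{P}_*$. Your proposal never invokes Halpern--L\"auchli, yet it is the sole source of the asymmetric alternative in the conclusion; without it your ``density dichotomy driven along the fusion'' has no engine, and you would in effect be re-proving Halpern--L\"auchli from scratch.

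A secondary but real problem is your treatment of Souslin measurability. You propose to reduce to clopen $\mathcal{P}$ by asserting that Souslin measurable sets are completely Ramsey for an Ellentuck-type topology on the subspaces of $\mathcal{U}(\mathbf{T})$. That assertion is itself a theorem of the same depth as the result you are proving; the paper does not establish it and instead routes all of the measurability through Theorem \ref{tree_Hales_Jewett_infinite}, which is obtained from the abstract Ramsey space machinery of Theorem \ref{abstract_Ramsey} with the finite word theorem (Theorem \ref{tree_Hales_Jewett_2}) as the pigeonhole axiom \textbf{A.4}. You cannot take the complete Ramsey property as given and separately claim a reduction to the clopen case; the measurable input must enter precisely where the paper puts it, namely in the infinite-dimensional Hales--Jewett theorem used in the canonicalization step.
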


 For the proof of Theorem \ref{disjoint_Union_tree} we prove an analogue of the infinite dimensional version of the Hales--Jewett Theorem \cite{C,FK} for maps defined on the level product of a vector tree (see Theorem \ref{tree_Hales_Jewett_infinite} below). We obtain the latter by an application of the Abstract Ramsey Theory developed in \cite{To}. The pigeonhole needed here
 is an analogue of the Hales--Jewett Theorem \cite{HJ} for maps defined on the level product of a vector tree (see Theorem \ref{tree_Hales_Jewett}
  below).

In Section \ref{section_notation_for_trees} we include the notation concerning trees and product of trees, while in Section \ref{section_background_material} we include some background material needed for the present work. Sections \ref{section_notation_words}--\ref{section_proof_tree_Hales_Jewett} are devoted to the proof of Theorem \ref{tree_Hales_Jewett}. The proof of Theorem \ref{tree_Hales_Jewett} follows similar lines to the ones in \cite{K}. In section \ref{section_infinite_Hales_Jewett} we obtain the infinite dimensional version of Theorem \ref{tree_Hales_Jewett} while in Section \ref{section_consequences} we obtain some consequences needed for the proof of the main result
which is the subject of Section \ref{section_main_res}.

\section{Basic notation for trees and vector trees}\label{section_notation_for_trees}
By $\omega$ we denote the set of all non-negative integers, while $\emptyset$ stands for the empty set as well as the empty function. We adopt the convection that $n<\omega$ for all integers $n$.
Moreover, for every two finite sequences $\mathbf{a}$ and $\mathbf{b}$, by $\mathbf{a}^\con\mathbf{b}$ we denote the concatenation of $\mathbf{a}$ and $\mathbf{b}$.
Finally, for a map $f$ and a subset $D$ of the domain of $f$, by $f\upharpoonright D$ we denote the restriction of $f$ on $D$.
\subsection{Trees}
By the term tree we mean a partial ordered set $(T,\mik_T)$ such that
\begin{enumerate}
  \item[(i)] $T$ has a minimum, which is called the root of $T$ and is denoted by $r_T$,
  \item[(ii)] for every $t$ in $T$, the set of the predecessors of $t$ inside $T$
            \[\pred_T(t)=\{s\in T: s<_Tt\},\]
            is well ordered and finite,
  \item[(iii)] for all non-maximal $t\in T$, the set of the immediate successors of $t$ inside $T$
            \[\begin{split}\imsuc_T(t)=\{s\in T:\;& t<_Ts\;\text{and for all}\;s'\in T\\
            &\text{with}\;t<_Ts'\mik_Ts,\;\text{we have that}\;s=s'\}
            \end{split}\]
            is finite and
  \item[(iv)] the tree $T$ is balanced, i.e., all its maximal chains have the same cardinality.
\end{enumerate}

Let us fix a tree $T$. The height of $T$, denoted by $h(T)$, is defined to be the common cardinality of its chains. Let us observe that $T$ is infinite if and only if $h(T)=\omega$.
For every node $t$ in $T$, we set
\[\suc_T(t)=\{t'\in T:\;t\mik_T t'\}\]
and $\ell_T(t)$ to be the cardinality of the set $\pred_T(t)$.
For a non-negative integer $n$ with $n< h(T)$, we define the $n$-th level $T(n)$ of $T$ to be the set of all nodes $t$ in $T$ with
$\ell_T(t)=n$. For a subset $D$ of $T$, we define the level set of $D$ inside $T$ to be the set
\[L_T(D)=\{n\in\omega:\;n<h(T)\;\text{and}\;T(n)\cap D\neq\emptyset\}.\]
A nonempty subset $D$ of $T$ is called a level subset of $T$ if there exists a non-negative integer $n$ with $n<h(T)$ such that $D$ is a subset of $T(n)$. If $D_1, D_2$ are two level subsets of $T$, we say that $D_1$ dominates $D_2$ if for every $t\in D_2$ there exists $s\in D_1$ such that $t\mik_T s$.
If $T$ is infinite, then we say that a subset $D$ of $T$ is dense %dominating
in $T$, if setting $L_T(D)=\{\ell_0<\ell_1<...\}$, we have that for every $k$ in $\omega$ the set $D\cap T(\ell_k)$ dominates $T(k)$. Moreover, if $T$ is infinite and $t$ is a node in $T$, then we say that a subset $D$ of $T$ is $t$-dense %dominating
if $D\cap\suc_T(t)$ is dense %dominating
in $\suc_T(t)$.
Finally, for every pair of non-negative integers $m,n$ with $m\mik n\mik h(T)$, we set \[T\upharpoonright[m,n)=\bigcup_{p=m}^{n-1}T(p)\]
if $m<n$ and $\emptyset$ otherwise. For notational simplicity, for every non-negative integer $n$ with $n\mik h(T)$, by $T\upharpoonright n$, we denote the set $T\upharpoonright[0,n)$.

Assume that $T$ is of infinite height and let $D$ be a dense %dominating
subset of $T$. Set $L_T(D)=\{\ell_0<\ell_1<...\}$.
For every non-negative integer $n$ we set
\[D(n)=D\cap T(\ell_n).\]
For every pair of non-negative integers $m,n$ with $m\mik n$ we set
\[D\upharpoonright[m,n)=\bigcup_{i=m}^{n-1}D(i),\]
while for simplicity we set $D\upharpoonright n=D\upharpoonright[0,n)$.

%If $k$ is a non-negative integer and $D$ a level subset of $T$, we say that $D$ is $k$-dense if $D$ dominate $T(k)$.
%We say that a subset $D$ of $T$ is dense, if setting $L_T(D)=\{\ell_0<\ell_1<...\}$ we have that for every $k$ in $\nn$ the set $D\cap T(\ell_k)$ is $k$-dense.

%Finally, for every positive integer $n$, we set $T|n=\{t\in T:\ell_T(t)<n\}$.

\subsection{Vector trees and level products}

A vector tree $\mathbf{T}$ is a finite sequence of trees having the same height.
The height of a vector tree $\mathbf{T}$, denoted by $h(\mathbf{T})$, is defined to be the
common height of its components.
Let $d$ be a positive integer and $\mathbf{T}=(T_1,...,T_d)$ be a vector tree. We define the level product of $\mathbf{T}$ as
\[\otimes\mathbf{T}=\bigcup_{n<h(\mathbf{T})} T_1(n)\times...\times T_d(n).\]
We endow the level product $\otimes\mathbf{T}$ with an ordering $\mik_{\otimes\mathbf{T}}$ defined as follows.
For every $\ttt=(t_1,...,t_d)$ and $\sss=(s_1,...,s_d)$ in $\otimes\mathbf{T}$ we set
$\ttt\mik_{\otimes\mathbf{T}}\sss$ if $t_i\mik_{T_i}s_i$ for all $1\mik i\mik d$. It is easy to see that
$(\otimes\mathbf{T},\mik_{\otimes\mathbf{T}})$ is a tree.

A sequence $\mathbf{D}=(D_1,...,D_d)$ is called a vector subset of $\mathbf{T}$ if $D_i\subseteq T_i$ for all $1\mik i\mik d$
and for every $1\mik i,j\mik d$ we have $L_{T_i}(D_i)=L_{T_j}(D_j)$. We set the level set $L_\mathbf{T}(\mathbf{D})$ of $\mathbf{D}$ inside $\mathbf{T}$ to be the set $L_{T_1}(D_1)$.
%***check the next definition if needed
%then for every non-negative integer $n$, by $\mathbf{D}(n)$ we denote the sequence $(D_1\cap T_1(n),...,D_d\cap T_d(n))$, while
Moreover, if $\mathbf{D}$ is a vector subset of $\mathbf{T}$, the level product of $\mathbf{D}$ is defined as follows
\[\otimes\mathbf{D}=\bigcup_{n<h(\mathbf{T})}\big(D_1\cap T_1(n)\big)\times...\times \big(D_d\cap T_d(n)\big).\]
We say that a vector subset $\mathbf{D}$ of $\mathbf{T}$
is a vector level subset, if $L_\mathbf{T}(\mathbf{D})$ is a singleton. If $\mathbf{D}_1=(D^1_1,...,D^1_d)$ and $\mathbf{D}_2=(D_1^2,...,D_d^2)$ are two vector level subsets of $\mathbf{T}$, we say that $\mathbf{D}_1$ dominates $\mathbf{D}_2$, if $D_i^1$ dominates $D_i^2$ for all $1\mik i\mik d$.
Moreover, if $\mathbf{T}$ is of infinite height, we say that a vector subset $\mathbf{D}=(D_1,...,D_d)$ is dense %dominating
if $D_i$ is dense %dominating
in $T_i$ for every $1\mik i\mik d$. Finally, if $\mathbf{T}$ is of infinite height and $\ttt=(t_1,...,t_d)$ is in $\otimes\mathbf{T}$, we say that a vector subset $\mathbf{D}=(D_1,...,D_d)$ is $\mathbf{t}$-dense %dominating
if $D_i$ is a $t_i$-dense %dominating
subset of $T_i$ for every $1\mik i\mik d$.

Assume that $\mathbf{T}$ is of infinite height and let $\mathbf{D}=(D_1,...,D_d)$ be a dense %dominating
vector subset of $\mathbf{T}$.
For every non-negative integer $n$ we set
\[\mathbf{D}(n)=\big(D_1(n),...,D_d(n)\big).\]
For every pair of non-negative integers $m,n$ with $m\mik n$ we set
\[\mathbf{D}\upharpoonright[m,n)=\Big(\bigcup_{i=m}^{n-1}D_1(i),...,\bigcup_{i=m}^{n-1}D_d(i)\Big),\]
while for simplicity we set $\mathbf{D}\upharpoonright n=\mathbf{D}\upharpoonright[0,n)$.

\section{Background material}\label{section_background_material}
In this section we gather some background material needed for the present work.
\subsection{The Hales--Jewett Theorem}
The Hales--Jewett Theorem is one of the cornerstones of Ramsey theory. Let us start with some pieces of notation.
Let $N$ be a positive integer and $\Lambda$ a finite alphabet. We view the elements of the set  $\Lambda^N$ of all functions from the set $\{0,...,N-1\}$ into $\Lambda$ as constant words of length $N$ over the alphabet $\Lambda$. Also, let a symbol $v\not\in\Lambda$. A variable word $w(v)$ of length $N$ over $\Lambda$ is a function from $\{0,...,N-1\}$ into $\Lambda\cup\{v\}$ such that the symbol $v$ occurs at least once. For a variable word $w(v)$ over $\Lambda$ and a letter $\alpha\in\Lambda$ we denote by $w(\alpha)$ the constant word resulting by substituting each occurrence of $v$ by $\alpha$.
A combinatorial line is a set of the form $\{w(\alpha):\;\alpha\in\Lambda\}$, where $w(v)$ is a variable word over $\Lambda$.

\begin{thm}
  [Hales--Jewett]
  \label{Hales_Jewett}
  Let $k,r$ be positive integers. Then there exists a positive integer $N_0$ with the following property.
  For every positive integer $N$ with $N\meg N_0$, every alphabet $\Lambda$ with $k$ elements and every $r$-coloring of $\Lambda^N$, there exists a monochromatic combinatorial line. We denote the least such  $N_0$ by $\mathrm{HJ}(k,r)$.
\end{thm}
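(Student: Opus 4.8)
The plan is to prove the Hales--Jewett Theorem by induction on the alphabet size $k$, following the classical Shelah argument, which gives primitive-recursive bounds and is the cleanest route to the exact existence statement. The base case $k=1$ is trivial: with a one-letter alphabet $\Lambda=\{\alpha\}$, every variable word $w(v)$ yields the single constant word $w(\alpha)=\Lambda^N$, so any coloring is monochromatic on this (degenerate) line; thus $\mathrm{HJ}(1,r)=1$ works. The inductive step assumes $\mathrm{HJ}(k-1,r)$ exists for every $r$ and produces $\mathrm{HJ}(k,r)$.

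For the inductive step I would first fix $\Lambda$ with $|\Lambda|=k$, single out a letter, say $k-1$ (identifying $\Lambda$ with $\{0,\dots,k-1\}$), and set up a sequence of ``moving coordinates.'' The key device is to choose a rapidly growing sequence of block lengths $N_1,N_2,\dots,N_m$ and to think of a word in $\Lambda^N$ (with $N=N_1+\cdots+N_m$) as concatenated blocks. One uses the \emph{ordinary} (non-dual) Hales--Jewett theorem for the \emph{smaller} alphabet $\Lambda\setminus\{k-1\}$, i.e. $\mathrm{HJ}(k-1,\cdot)$, to control, block by block, how the coloring behaves when we toggle a block between the all-$(k-1)$ pattern and patterns using the variable over the reduced alphabet. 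Concretely, I would inductively select variable words $w_1,\dots,w_m$ on successive blocks so that, by the pigeonhole-via-$\mathrm{HJ}(k-1,r)$ applied at each stage, the color of a concatenation depends only on how many of the initial blocks have had their variable set to $k-1$ versus instantiated. Taking $m$ large enough (larger than $r$) and applying the ordinary pigeonhole to these $m+1$ ``transition states'' forces two states to receive the same color; the combinatorial line realizing the jump between those two states, with $v$ placed on the corresponding block(s), is then monochromatic over all of $\Lambda$. Setting $N_0$ to be the resulting sum $N_1+\cdots+N_m$ and taking the least value gives $\mathrm{HJ}(k,r)$.

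The main obstacle, and the step requiring genuine care, is organizing the simultaneous induction so that the reduced-alphabet applications compose correctly: at each block one must apply $\mathrm{HJ}(k-1,r')$ with the number of colors $r'$ inflated to account for all the ways later blocks can be filled, and one must verify that the variable words chosen on earlier blocks remain ``compatible'' (their instantiations are not disturbed) when later blocks are processed. Making the dependency of $r'$ and of the block lengths $N_j$ on the induction precise, and checking that the final concatenated variable word genuinely places $v$ so that substituting each $\alpha\in\Lambda$ yields the two monochromatic endpoints, is where the bookkeeping lives. Once the indexing is set up correctly the pigeonhole at the end is immediate, since $m>r$ guarantees a repeated color among the $m+1$ states.

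An alternative I would mention but not pursue in detail is the direct Graham--Rothschild / compactness-free double induction on $(k,r)$ or a derivation from the Graham--Leeb--Rothschild theorem; these also yield existence but obscure the explicit bound, whereas the Shelah scheme above produces $\mathrm{HJ}(k,r)$ as a concrete least integer, which is exactly the form asserted in the statement.
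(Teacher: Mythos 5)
The paper does not prove this statement: Theorem \ref{Hales_Jewett} is quoted as classical background with a citation to Hales and Jewett, and the authors only ever use the existence of the numbers $\mathrm{HJ}(k,r)$ (in Lemmas \ref{pushing_large_lem} and \ref{getting_inside_large}). So there is no proof in the paper to compare yours against, and I can only assess your sketch on its own terms. As an existence proof it is correct in outline: it is the standard induction on the alphabet size $k$ in which one fixes a distinguished letter, splits $\{0,\dots,N-1\}$ into blocks, uses the induction hypothesis $\mathrm{HJ}(k-1,r')$ with an inflated number of colors $r'$ (encoding all relevant fillings of the other blocks) to find in each block a variable word over the reduced alphabet whose instantiations are color-indistinguishable, and then applies the pigeonhole to the $m+1$ anchor words obtained by switching an initial segment of blocks from the distinguished letter to an instantiated value; the line joining two same-colored anchors, with $v$ on the intermediate blocks, is monochromatic over all of $\Lambda$. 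You correctly flag the one point needing care, namely that each block's indistinguishability must hold uniformly over the fillings of the other blocks that actually occur in the final line; this is handled by enlarging $r'$ (or by observing that indistinguishability of the earlier blocks makes the choice of prefix immaterial). Two corrections of side claims: first, what you describe is the classical Graham--Rothschild color-focusing induction, not Shelah's argument --- Shelah's block lemma merges only \emph{two} letters per block via a bare pigeonhole on the $N_i+1$ ``staircase'' words $1^{j}0^{N_i-j}$ and invokes $\mathrm{HJ}(k-1,r)$ exactly once at the end, and it is that structure which yields primitive recursive bounds; second, and consequently, your assertion that the sketched argument is primitive recursive is not right, since nesting $\mathrm{HJ}(k-1,\cdot)$ inside each of the $m$ block choices produces Ackermann-type growth. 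Neither point affects the existence of $\mathrm{HJ}(k,r)$, which is all the paper requires.
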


\subsection{The Halpern--L\"auchli Theorem}
The next result is due to
 J. D. Halpern and  H. L\"auchli \cite{HL} (see also \cite{AFK} and \cite{To} for combinatorial proofs). As we mentioned in the introduction, the main result of the present work, i.e. Theorem \ref{disjoint_Union_tree}, can be viewed as a dual form of the following.

\begin{thm}
  \label{Halpern_Lauchli}
  Let $\mathbf{T}$ be a vector tree of infinite height.
  Then for every dense %dominating
  vector subset $\mathbf{D}$ of $\mathbf{T}$ and every subset $\mathcal{P}$ of $\otimes\mathbf{D}$, there exists a vector subset $\mathbf{D}'$ of $\mathbf{D}$ such that either
  \begin{enumerate}
    \item[(i)] $\otimes\mathbf{D}'$ is a subset of $\mathcal{P}$ and $\mathbf{D}'$ is a dense %dominating
        vector subset of $\mathbf{T}$, or
    \item[(ii)] $\otimes\mathbf{D}'$ is a subset of $\mathcal{P}^c$ and $\mathbf{D}'$ is
        a $\ttt$-dense %dominating
        vector subset $\mathbf{D}'$ of $\mathbf{T}$ for some $\ttt$ in $\otimes\mathbf{T}$.
  \end{enumerate}
\end{thm}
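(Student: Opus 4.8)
The plan is to prove this as the dense form of the classical Halpern--L\"auchli theorem, organizing the argument around a single fusion whose success or failure yields the two alternatives. Call a dense vector subset $\mathbf{D}'\mik\mathbf{D}$ \emph{$\mathcal{P}$-monochromatic} if $\otimes\mathbf{D}'\subseteq\mathcal{P}$. I would attempt to build such a $\mathbf{D}'$ greedily; if the attempt continues to infinity we land in alternative (i), whereas the obstruction that can block it at a finite stage will be localizable to the cone above a single node $\ttt\in\otimes\mathbf{T}$, inside which the opposite color $\mathcal{P}^c$ becomes dense, giving alternative (ii) with a $\ttt$-dense witness. The asymmetry of the two alternatives---full density in color $\mathcal{P}$ versus density only above a node in color $\mathcal{P}^c$---mirrors exactly this success/failure dichotomy.

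First I would set up the fusion. Because every $T_i$ is finitely branching, each level $T_i(k)$ is finite, so a dense vector subset is produced by recursively selecting levels $\ell_0'<\ell_1'<\cdots$ within $L_{\mathbf{T}}(\mathbf{D})$ and, at stage $k$, a vector level subset $\mathbf{E}_k=(E_1,\dots,E_d)$ with each $E_i\subseteq D_i\cap T_i(\ell_k')$ dominating $T_i(k)$; the invariant to preserve is that the product $E_1\times\cdots\times E_d$ stays inside $\mathcal{P}$. Since each target $T_i(k)$ is finite, the data to be chosen at every stage amount to colorings of finite sets, and a K\"onig-type compactness argument assembles the stagewise choices into an honest dense vector subset; finite branching is essential precisely here. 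What must be supplied at each stage is a one--step lemma: given the finite target levels, one can find, at some sufficiently high level of $\mathbf{D}$, a dominating vector level subset whose product is monochromatic in color $\mathcal{P}$---unless this fails, in which case the failure pins down a node $\ttt$ above which $\mathcal{P}^c$ persists, which is exactly the seed for a $\ttt$-dense fusion into $\mathcal{P}^c$.

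This one--step lemma is the heart of the matter, and I would establish it by induction on the number $d$ of components of $\mathbf{T}$. The base case $d=1$ is a pigeonhole together with K\"onig's lemma on a single finitely branching tree: a two--coloring of a dense subset either admits a dense subset in the first color or has its second color dense above some node. The step from $d-1$ to $d$ coordinates is the main obstacle, and is where finite branching is again used decisively: one freezes the last coordinate over the finitely many nodes occurring at the current level, applies the pigeonhole there, and feeds the surviving configuration into the inductive hypothesis for the first $d-1$ coordinates, the delicacy being to maintain domination \emph{simultaneously} on all $d$ coordinates while keeping the product in a single color. Running the fusion of the previous paragraph on top of this lemma then forces the dichotomy: if the $\mathcal{P}$--side succeeds at every stage we obtain a dense $\mathbf{D}'$ with $\otimes\mathbf{D}'\subseteq\mathcal{P}$, giving (i); and if it fails, restricting to $\suc_{\otimes\mathbf{T}}(\ttt)$ for the exposed node $\ttt$ and rerunning the construction inside that cone produces a $\ttt$-dense $\mathbf{D}'$ with $\otimes\mathbf{D}'\subseteq\mathcal{P}^c$, giving (ii).
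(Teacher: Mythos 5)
The paper does not prove Theorem \ref{Halpern_Lauchli}: it is quoted as a known deep result of Halpern and L\"auchli, with \cite{HL}, \cite{AFK} and \cite{To} cited for proofs. So the question is whether your sketch stands on its own, and it does not. The entire difficulty of the Halpern--L\"auchli theorem is concentrated in what you call the ``one-step lemma'' for $d\geq 2$ coordinates, and the induction on $d$ you propose --- freeze the last coordinate over the finitely many nodes at the current level, apply a finite pigeonhole, and feed the surviving configuration into the inductive hypothesis for the first $d-1$ coordinates --- is precisely the naive argument that is known to fail. The obstruction is this: fixing a node $t_d$ of the last tree induces a coloring of the level product of the first $d-1$ trees, but that induced coloring varies with $t_d$ and with the level from which $t_d$ is drawn; the inductive hypothesis hands back, for each such choice, a different (somewhere) dense vector subset of $(T_1,\dots,T_{d-1})$, and no pigeonhole applied at a single finite level can stabilize these across the unboundedly many levels one must control to certify density of the output. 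Stabilizing them simultaneously is essentially the full strength of the theorem, which is why every actual proof uses substantially heavier machinery (the original tower argument of Halpern--L\"auchli, Harrington's forcing argument, or the iterated-ultrafilter-limit proof in \cite{To}). Your base case $d=1$ is genuinely elementary, but the step to $d=2$ already contains the whole theorem.

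A secondary gap is the derivation of alternative (ii) from the failure of (i). The negation of ``for every $k$ some high level of $\mathbf{D}$ carries a dominating vector level subset whose product lies in $\mathcal{P}$'' does not by itself ``pin down a node $\ttt$ above which $\mathcal{P}^c$ persists''; converting that failure into a single $\ttt\in\otimes\mathbf{T}$ together with a $\ttt$-dense vector subset whose entire level product lies in $\mathcal{P}^c$ requires rerunning the same hard dense-set pigeonhole inside the cone above $\ttt$, i.e., it is again the lemma you have not proved. The K\"onig-lemma fusion scaffolding you describe is standard and fine; the proof as written assumes the theorem at its crux.
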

%The above theorem has the following immediate consequence.

%\begin{thm}
%  Let $\mathbf{T}$ be a vector tree of infinite height and $c$ a finite coloring of $\otimes\mathbf{T}\to\{1,...,r\}$. Then for every dominating vector subset $\mathbf{D}$ of $\mathbf{T}$ there exist $\ttt$ in $\otimes\mathbf{T}$ and a $\ttt$-dominating vector subset $\mathbf{D}'$ of $\mathbf{D}$ such that $\otimes\mathbf{D}'$ in monochromatic.
%\end{thm}

\subsection{Souslin measurability}\label{Souslin}
By $\mathcal{N}$ we denote the set of all infinite sequences in $\omega$
and by $\omega^{<\infty}$ the set of all finite sequences in $\omega$.
A Souslin scheme is a family $(X_s)_{s\in\omega^{<\infty}}$ of sets.
We say that a collection $\mathcal{C}$ of subsets of some fixed set $X$ is closed under the
Souslin operation if for every Souslin scheme $(X_s)_{s\in\omega^{<\infty}}$ consisting of
elements of $\mathcal{C}$ we have that the set
\[\bigcup_{\sigma\in\mathcal{N}}\bigcap_{n\in\omega}X_{\sigma\upharpoonright n}\]
belongs to $\mathcal{C}$.

Let $\mathcal{T}$ be a topological space. By $\mathcal{SM}(\mathcal{T})$ we denote the smallest algebra of subsets of $\mathcal{T}$
that contains all the open sets and is closed under the Souslin operation. A subset of $\mathcal{T}$ is called Souslin measurable if it belongs to $\mathcal{SM}(\mathcal{T})$.
Let us point out that $\mathcal{SM}(\mathcal{T})$ is a $\sigma$-algebra of subsets of $\mathcal{T}$ containing all the open sets. Thus, in particular, every Borel subset of $\mathcal{T}$ is Souslin measurable. Moreover, every analytic or coanalytic subset of $\mathcal{T}$ is Souslin measurable. Finally, we say that a coloring of $\mathcal{T}$ is Souslin measurable if the inverse image of every color is a Souslin measurable subset of $\mathcal{T}$.

\subsection{Abstract Ramsey Theory} \label{abstract_Ramsey_theory}
Abstract Ramsey theory was developed after several classical infinite-dimensional Ramsey-theoretic results were deduced from the corresponding one-dimensional pigeonhole principles using a similar procedure. In \cite{C} and \cite{CS2}, T. J. Carlson and S. G. Simpson were first to provide a set of axioms that underline the concept of topological Ramsey space. Here, we shall need the extension of this approach to the concept of general Ramsey space due to S. Todorcevic  (\cite{To}). We shall need to recall the approach from \cite{To}.

A \textit{Ramsey space} is a tuple of the form \[(\mathcal{R},\mathcal{S},\mik,\mik^o, r,s),\]
satisfying the following.
\begin{enumerate}
  \item[(i)] Both $\mathcal{R}$ and $\mathcal{S}$ are sets, while $\mik$ and $\mik^o$ are binary relations.
  \item[(ii)] The binary relation $\mik$ is a reflexive and transitive relation on $\mathcal{S}$, while $\mik^o$ is a subset of the cartesian product $\mathcal{R}\times\mathcal{S}$ such that for every $X,Y$ in $\mathcal{S}$ and $A$ in $\mathcal{R}$ we have that
      \[\text{if}\;A\mik^o X\;\text{and}\;X\mik Y\;\text{then}\;A\mik^o Y.\]
  \item[(iii)] There exist sets $\mathcal{AR}$ and $\mathcal{AS}$ such that
      both $r$ and $s$ are functions with
      \[r:\mathcal{R}\times\omega\to\mathcal{AR}\;\text{and}\;s:\mathcal{S}\times\omega\to\mathcal{AS}.\]

\end{enumerate}
For every non-negative integer $n$ we define $r_n:\mathcal{R}\to\mathcal{AR}$
and $s_n:\mathcal{S}\to\mathcal{AS}$ setting
\[r_n(A)=r(A,n)\;\text{and}\;s_n(X)=s(X,n)\]
for all $A$ in $\mathcal{R}$ and $X$ in $\mathcal{S}$. Moreover, for every non-negative integer $n$, we set $\mathcal{AR}_n$ to be the range of $r_n$ and $\mathcal{AS}_n$ to be the range of
$s_n$.
Without loss of generality, we may assume that $\mathcal{AR}=\bigcup_{n=0}^\infty\mathcal{AR}_n$ and $\mathcal{AS}=\bigcup_{n=0}^\infty\mathcal{AS}_n$.

We consider four axioms about a Ramsey space. The first axiom is the following.
\begin{enumerate}
  \item[\textbf{A.1}] (Sequencing) For any choice $(p,\mathcal{P})$ in $\{(r,\mathcal{R}), (s,\mathcal{S})\}$, the following are satisfied for every $P,Q$ in $\mathcal{P}$.
      \begin{enumerate}
        \item[(i)] $p_0(P)=p_0(Q)$.
        \item[(ii)] If $P\neq Q$ then $p_n(P)\neq p_n(Q)$ for some non-negative integer $n$.
        \item[(iii)] If for some pair of non-negative integers $n,m$ we have that $p_n(P)=p_m(Q)$ then $n=m$ and $p_k(P)=p_k(Q)$ for all $0\mik k\mik n$.
      \end{enumerate}
\end{enumerate}

Let us observe that if a Ramsey space satisfies axiom \textbf{A.1} then each element $A$ of $\mathcal{R}$ (resp. $X$ of $\mathcal{S}$) can be identified by the sequence $\big(r_n(A)\big)_{n=0}^\infty$ in $\mathcal{AR}$ (resp. the sequence $\big(s_n(X)\big)_{n=0}^\infty$ in $\mathcal{AS}$). Thus, in this case, the set $\mathcal{R}$ (resp. $\mathcal{S}$) can be viewed as a subset of the space $\mathcal{AR}^\omega$ (resp. $\mathcal{AS}^\omega$)  of all sequences in $\mathcal{AR}$ (resp. $\mathcal{AS}$). We endow the set $\mathcal{AR}$ with the discrete topology and the set $\mathcal{AR}^\omega$ with the corresponding product topology which we refer to as the metric topology. Thus the set $\mathcal{R}$ inherits a topology, called the metric topology on $\mathcal{R}$. Similarly,
we consider the metric topology on $\mathcal{AS}^\omega$.

Similarly, if a Ramsey space satisfies axiom \textbf{A.1} then each element $a$ of $\mathcal{AR}$ (resp. $x$ of $\mathcal{AS}$) can be identified by a finite sequence $\big(r_k(A)\big)_{k<n}$ for some $A$ in $\mathcal{R}$ (resp.  $\big(s_k(X)\big)_{k<m}$ for some $X$ in $\mathcal{S}$).
Let us observe that the numbers $n$ and $m$ are unique. We denote them by $|a|$ and $|x|$ respectively.

These observations give rise to the following ordering $\sqsubseteq$ defined on both $\mathcal{AR}$ and $\mathcal{AS}$. For $a$ and $b$ in $\mathcal{AR}$ we write $a\sqsubseteq b$
if there exist non-negative integers $m,n$ with $m\mik n$ and $A$ in $\mathcal{R}$ such that $a=r_m(A)$ and $b=r_n(A)$. Similarly, for $x$ and $y$ in $\mathcal{AS}$ we write $x\sqsubseteq y$
if there exist non-negative integers $m,n$ with $m\mik n$ and $X$ in $\mathcal{S}$ such that $x=r_m(X)$ and $Y=r_n(X)$.

The second axiom about a Ramsey space is the following.
\begin{enumerate}
  \item[\textbf{A.2}] (Finitization) There exist a binary relation $\mik^o_\mathrm{fin}$, subset of the cartesian product $\mathcal{AR}\times\mathcal{AS}$ and a reflexive and transitive binary relation $\mik_\mathrm{fin}$ on $\mathcal{AS}$ satisfying the following.
      \begin{enumerate}
        \item[(1)] For every $x$ in $\mathcal{AS}$ both the sets $\{a\in\mathcal{AR}:a\mik^o_\mathrm{fin}x\}$ and
            $\{y\in\mathcal{AS}:y\mik_\mathrm{fin}x\}$ are finite.
        \item[(2)] For every $Y,X$ in $\mathcal{S}$ we have that $Y\mik X$ if and only if for every $n$ there exists $m$ satisfying $s_n(X)\mik_\mathrm{fin}s_m(Y)$.
        \item[(3)] For every $A$ in $\mathcal{A}$ and $X$ in $\mathcal{S}$ we have that $A\mik^o X$ if and only if for every $n$ there exists $m$ satisfying $s_n(A)\mik^o_\mathrm{fin}s_m(X)$.
        \item[(4)] For every $a$ in $\mathcal{AR}$ and $x,y$ in $\mathcal{AS}$, we have that $a\mik_\mathrm{fin}^o x\mik_\mathrm{fin}y$ implies $a\mik^o_\mathrm{fin}y$.
        \item[(5)] For every $a,b$ in $\mathcal{AR}$ and $x$ in $\mathcal{AS}$, we have that $a\sqsubseteq b$ and $b\mik^o_\mathrm{fin}x$ implies the existence of some $y\sqsubseteq x$ satisfying $a\mik^o_\mathrm{fin}y$.
      \end{enumerate}
\end{enumerate}

To state the third axiom, we need some additional notation. Let $a$ in $\mathcal{AR}$, $x$ in $\mathcal{AS}$, $m$ in $\omega$ and $Y$ in $\mathcal{S}$. We define the \textit{basic sets} as follows.
\[\begin{split}
  &[a,Y]=\{A\in\mathcal{R}:\;A\mik^oY\;\text{and there exists}\;n\;\text{in}\;\omega\;\text{such that}\;r_n(A)=a\},\\
  &[x,Y]=\{X\in\mathcal{S}:\;X\mik Y\;\text{and there exists}\;n\;\text{in}\;\omega\;\text{such that}\;s_n(X)=x\}\;\text{and}\\
  &[m,Y]=[s_m(Y),Y].
\end{split}\]
Moreover, for every $a$ in $\mathcal{AR}$ and $Y$ in $\mathcal{S}$ we set
\[
\mathrm{depth}_Y(a)=\left\{ \begin{array} {l} \min\big\{k\in\omega:a\mik^o_\mathrm{fin}s_k(Y)\big\}\;\;\;\;\text{if}\;a\mik^o_\mathrm{fin}s_k(Y)\;\text{for some}\;k\in\omega\\
\infty\;\;\;\;\;\;\;\;\;\;\;\;\;\;\;\;\;\;\;\;\;\;\;\;\;\;\;\;\;\;\;\;\;\;\;\;\;\;\;\;\;\;\text{otherwise}.\end{array}  \right.
\]
The third axiom about a Ramsey space is the following.
\begin{enumerate}
  \item[\textbf{A.3}] (Amalgamation)
  \begin{enumerate}
    \item[(1)] For every $a$ in $\mathcal{AR}$ and $Y$ in $\mathcal{S}$ we have that if $d=\mathrm{depth}_Y(a)<\infty$ then every $X$ in $[d,Y]$ satisfies $[a,X]\neq\emptyset$.
    \item[(2)] For every $a$ in $\mathcal{AR}$ and $X,Y$ in $\mathcal{S}$ we have that if $X\mik Y$ and $[a,X]\neq\emptyset$ then there exists $Y'$ in $[\mathrm{depth}_Y(a),Y]$ satisfying $[a,Y']\subseteq[a,X]$.
  \end{enumerate}
\end{enumerate}

Finally, the forth axiom about a Ramsey space is the following.
\begin{enumerate}
  \item[\textbf{A.4}] (Pigeonhole) For every $a$ in $\mathcal{AR}$, every subset $\mathcal{O}$ of $\mathcal{AR}_{l+1}$, where $l=|a|$, and every $Y$ in $\mathcal{S}$ with $[a,Y]\neq\emptyset$, there exists $X$ in $[\mathrm{depth}_Y(a),Y]$ such that either $r_{l+1}[a,X]\subseteq\mathcal{O}$ or $r_{l+1}[a,X]\subseteq\mathcal{O}^c$.
\end{enumerate}
We will need the following consequence of the Abstract Ramsey Theorem.
\begin{thm}
  \label{abstract_Ramsey}
  Let $(\mathcal{R},\mathcal{S},\mik,\mik^o, r,s)$ be a Ramsey space satisfying axioms A.1--A.4.
  Assume also that $\mathcal{R}$ is a closed subset of $\mathcal{AR}^\omega$. Then for every finite Souslin measurable coloring of $R$ and $X$ in $\mathcal{S}$, there exists $X'$ in $\mathcal{S}$ with $X'\mik X$ such that the set $\{A\in\mathcal{R}:A\mik^oX'\}$ is monochromatic.
\end{thm}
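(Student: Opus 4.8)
The plan is to deduce the statement from the Abstract Ramsey Theorem of \cite{To}, which guarantees that under axioms \textbf{A.1}--\textbf{A.4} together with the closedness of $\mathcal{R}$ in $\mathcal{AR}^\omega$, every Souslin measurable $\mathcal{X}\subseteq\mathcal{R}$ is \emph{Ramsey}: for every $a$ in $\mathcal{AR}$ and $Y$ in $\mathcal{S}$ with $[a,Y]\neq\emptyset$ there exists $X$ in $[\mathrm{depth}_Y(a),Y]$ such that either $[a,X]\subseteq\mathcal{X}$ or $[a,X]\cap\mathcal{X}=\emptyset$. The first observation is that the set whose monochromaticity we seek is itself a basic set of the simplest kind. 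By axiom \textbf{A.1}(i) there is a single approximation $a_0=r_0(A)$ shared by all $A$ in $\mathcal{R}$; hence in the definition of $[a_0,Y]$ the existence clause ``there is $n$ with $r_n(A)=a_0$'' holds automatically with $n=0$, so that $[a_0,Y]=\{A\in\mathcal{R}:A\mik^o Y\}$ for every $Y$ in $\mathcal{S}$. Thus it suffices to find $X'\mik X$ making $[a_0,X']$ monochromatic.

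Writing the given coloring with Souslin measurable color classes $\mathcal{X}_1,\dots,\mathcal{X}_p$, I would run a finite induction. Put $X_0=X$. Suppose $X_{i-1}\mik X$ has been chosen so that $[a_0,X_{i-1}]$ meets none of the colors $1,\dots,i-1$. Since $a_0=r_0$ is the empty approximation, $\mathrm{depth}_{X_{i-1}}(a_0)=0$ and $X_{i-1}\in[0,X_{i-1}]$, so axiom \textbf{A.3}(1) yields $[a_0,X_{i-1}]\neq\emptyset$. Applying the Ramsey property of $\mathcal{X}_i$ at this basic set produces $X_i\in[0,X_{i-1}]$, in particular $X_i\mik X_{i-1}$, with either $[a_0,X_i]\subseteq\mathcal{X}_i$ or $[a_0,X_i]\cap\mathcal{X}_i=\emptyset$. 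In the first case I stop and set $X'=X_i$; in the second case I continue. Two bookkeeping facts keep the induction going: because $X_i\mik X_{i-1}$, clause (ii) of the definition of a Ramsey space gives that $A\mik^o X_i$ implies $A\mik^o X_{i-1}$, hence $[a_0,X_i]\subseteq[a_0,X_{i-1}]$ and the previously avoided colors remain avoided; and \textbf{A.3}(1) again supplies $[a_0,X_i]\neq\emptyset$.

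If the first case never arises, then after $p-1$ steps $[a_0,X_{p-1}]$ is a nonempty subset of $\mathcal{R}$ avoiding colors $1,\dots,p-1$, so every one of its elements has color $p$; we then take $X'=X_{p-1}$. In either case $X'\mik X$ and $[a_0,X']=\{A\in\mathcal{R}:A\mik^o X'\}$ is monochromatic, as required. The whole argument is a routine reduction: essentially all the work is hidden in the cited Abstract Ramsey Theorem, and the only points demanding care are the identification $\{A:A\mik^o X'\}=[a_0,X']$ and the verification that $\mathrm{depth}_Y(a_0)=0$ for the root approximation, which together ensure that the successive basic sets $[a_0,X_i]$ remain nonempty and nested. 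I expect no genuine obstacle beyond correctly matching the local ``Ramsey'' conclusion of \cite{To} to the global monochromaticity asserted here.
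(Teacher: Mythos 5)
The paper offers no proof of this theorem at all---it is imported from \cite{To} as a stated consequence of the Abstract Ramsey Theorem---and your reduction from the ``every Souslin measurable subset of $\mathcal{R}$ is Ramsey'' form of that theorem to the asserted monochromaticity (identifying $\{A:A\mik^o Y\}$ with the basic set $[a_0,Y]$ via \textbf{A.1}(i) and running the color-by-color induction) is the standard, correct bookkeeping. The one imprecision is the claim that $\mathrm{depth}_Y(a_0)=0$, which the axioms do not literally guarantee; but all your argument needs is $\mathrm{depth}_Y(a_0)<\infty$, which follows from $[a_0,Y]\neq\emptyset$ via \textbf{A.2}(3) with $n=0$ (and when $\{A:A\mik^o X\}=\emptyset$ the conclusion is vacuous), so the proof stands as written.
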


\section{Notation for words on level products}\label{section_notation_words}
In this section we introduce the notation needed to state Theorem \ref{tree_Hales_Jewett}. As we mentioned in the introduction, Theorem \ref{tree_Hales_Jewett} is the analogue of the Hales--Jewett Theorem for maps defined on the level product of a vector tree. Sections \ref{section_subspaces}, \ref{section_large_sets} and \ref{section_proof_tree_Hales_Jewett} are
devoted to the proof of Theorem \ref{tree_Hales_Jewett}.

Let us fix a finite set $\Lambda$ that we view as a finite alphabet and a vector tree $\mathbf{T}=(T_1,...,T_d)$ of infinite height.
Also let $m,n$ be two non-negative integers, with $m\mik n$. We define the set of constant words $\w(m,n)$ over $\Lambda$ on $\otimes\mathbf{T}\upharpoonright[m,n)$ to be the set of
all functions from $\otimes\mathbf{T}\upharpoonright[m,n)$ into $\Lambda$, i.e.,
\[\w(\Lambda,\mathbf{T},m,n)=\Lambda^{\otimes\mathbf{T}\upharpoonright[m,n)}.\]
Let us point out that if $m=n$, then $\w(\Lambda,\mathbf{T},m,n)=\{\emptyset\}$. Moreover, we define the set of all constant words to be $\w(\Lambda,\mathbf{T})=\bigcup_{m\mik n}\w(\Lambda,\mathbf{T},m,n)$.

Let $(v_\sss)_{\sss\in\otimes\mathbf{T}}$ be a
collection of distinct symbols not occurring in $\Lambda$. The elements of this collection serve as variables.
For every vector level subset $\mathbf{D}$ of $\mathbf{T}$,
we define
$\w_v(\Lambda,\mathbf{T},\mathbf{D},m,n)$ to be the set of all functions $f$ from $\otimes\mathbf{T}\upharpoonright[m,n)$
 into $\Lambda\cup\{v_\sss:\sss\in\otimes\mathbf{D}\}$ satisfying the following.
\begin{enumerate}
  \item[(i)] The set $f^{-1}(\{u_\sss\})$ is nonempty and admits $\sss$ as a minimum in $\otimes\mathbf{T}$, for all $\sss$ in $\otimes\mathbf{D}$.
  \item[(ii)] For every $\sss$ and $\sss'$ in $\otimes\mathbf{D}$, we have $L_{\otimes\mathbf{T}}(f^{-1}(\{u_\sss\}))=L_{\otimes\mathbf{T}}(f^{-1}(\{u_{\sss'}\}))$.
\end{enumerate}
Moreover, we set
\[\begin{split}
\w_v(\Lambda,\mathbf{T},m,n)=\bigcup \big\{\w_v(\Lambda,\mathbf{T},\mathbf{D},m,n):\;&\mathbf{D}\;\text{is a vector level subset of}\;\mathbf{T}\\
&\text{with}\;L_\mathbf{T}(\mathbf{D})\subset[m,n)\big\}.
\end{split}\]
For every $f$ in $\w_v(\Lambda,\mathbf{T},m,n)$, we set $\ws(f)=\mathbf{D}$, where $\mathbf{D}$ is the unique vector level subset of $\mathbf{T}$ such that $f\in\w_v(\Lambda,\mathbf{T},\mathbf{D},m,n)$.
%Moreover, for every $f$ in $\w_v(\Lambda,\mathbf{T},m,n)$, we set $\bt(f)=m$ and $\tp(f)=n$.
%Sometimes, we will denote an element $f$ of $\w_v(\Lambda,\mathbf{T})$ as $f\big((v_\sss)_{\sss\in\otimes\ws(f)}\big)$.
%We also denote the set of all ``left'' variable words by \[\w_v^L(\Lambda,\mathbf{T},m,n)=\big\{f\in\w_v(\Lambda,\mathbf{T},m,n):\;L_\mathbf{T}\big(\ws(f)\big)=\{m\}\big\}.\]
Finally, we define $\w_v(\Lambda,\mathbf{T})=\bigcup_{m<n}\w_v(\Lambda,\mathbf{T},m,n)$.
The elements of $\w_v(\Lambda,\mathbf{T})$ are viewed as variable words over the alphabet $\Lambda$.

Observe that for every nonempty element $f$ in $\w(\Lambda,\mathbf{T})\cup\w_v(\Lambda,\mathbf{T})$, there exists unique non-negative
integers $m,n$ with $m<n$ such that the function $f$ belongs to $\w(\Lambda,\mathbf{T},m,n)\cup\w_v(\Lambda,\mathbf{T},m,n)$.
We set $\bt(f)=m$ and $\tp(f)=n$.

For variable words, we consider substitutions. In particular, for every $f$ in $\w_v(\Lambda,\mathbf{T})$ and every family $\mathbf{a}=(a_\sss)_{\sss\in\otimes\ws(f)}$ of elements in $\Lambda$, we define $f(\mathbf{a})$ to be the unique element of $\w(\Lambda,\mathbf{T})$ resulting by substituting each occurrence
of $v_\sss$ by $a_\sss$, for every $\sss$ in $\otimes\ws(f)$.
Moreover, for every word, variable or not, we consider its span.
In particular, for every $f$ in $\w_v(\Lambda,\mathbf{T})$
we set
\[[f]_\Lambda=\{f(\mathbf{a}):\;\mathbf{a}=(a_\sss)_{\sss\in\otimes\ws(f)}\;\text{is a collection of elements from}\;\Lambda\},\]
while for every $f$ in $\w(\Lambda,\mathbf{T})$, we set $[f]_\Lambda=\{f\}$.

Let $f_1$ and $f_2$ in $\w(\Lambda,\mathbf{T})\cup\w_v(\Lambda,\mathbf{T})$. We say that the pair $(f_1,f_2)$ is compatible
if either at least one of $f_1$ and $f_2$ is the empty function or $\tp(f_1)=\bt(f_2)$. A finite sequence $(f_i)_{i=0}^{q-1}$ in $\w(\Lambda,\mathbf{T})\cup\w_v(\Lambda,\mathbf{T})$ is called compatible, if for every $0\mik i<q-1$ the pair $(f_i,f_{i+1})$ is compatible.
%A finite compatible sequence can also be the empty sequence, or a sequence of length 1.
Similarly, an infinite sequence $(f_i)_{i=0}^\infty$ in $\w(\Lambda,\mathbf{T})\cup\w_v(\Lambda,\mathbf{T})$, is called compatible, if for every non-negative integer $i$ we have that the pair $(f_i,f_{i+1})$ is compatible.

We extend the notion of span for compatible sequences. Let $m_1,m_2,m_3$ be three non-negative integers, with $m_1\mik m_2\mik m_3$. Also let $A$ be a subset of the set $\w(\Lambda,\mathbf{T},m_1,m_2)\cup\w_v(\Lambda,\mathbf{T},m_1,m_2)$ and $B$ a subset of the set $\w(\Lambda,\mathbf{T},m_2,m_3)\cup\w_v(\Lambda,\mathbf{T},m_2,m_3)$. We
set
\[A^\con B=\{f_1\cup f_2: f_1\in A\;\text{and}\;f_2\in B\}.\]
For a compatible finite sequence $(f_i)_{i=0}^{q-1}$ in $\w(\Lambda,\mathbf{T})\cup\w_v(\Lambda,\mathbf{T})$, we define its span as
\[[(f_i)_{i=0}^{q-1}]_\Lambda=[f_0]_\Lambda^\con...^\con[f_{q-1}]_\Lambda,\]
while for a compatible infinite sequence $(f_i)_{i=0}^\infty$, we define its span as
\[[(f_i)_{i=0}^\infty]_\Lambda=\bigcup_{q=1}^\infty[(f_i)_{i=0}^{q-1}]_\Lambda.\]

A compatible infinite sequence $X=(f_i)_{i=0}^\infty$ is called a subspace (of $\w(\Lambda,\mathbf{T})$), if the following are satisfied.
\begin{enumerate}
  \item[(i)] For every non-negative integer $i$, we have that $f_i$ belongs to $\w_v(\Lambda,\mathbf{T})$.
  \item[(ii)] For every non-negative integer $i$, we have that the vector level subset $\ws(f_i)$ dominates $\mathbf{T}(i)$.
  \item[(iii)] $\bt(f_0)=0$.
\end{enumerate}
%Let us observe that the first element of a subspace may be a the empty sequence.
For $Y$ and $X$ two subspaces, we say that $Y$ is a further subspace of $X$, we write $Y\mik X$, if $[Y]_\Lambda$ is a subset of $[X]_\Lambda$.
We prove the following.

\begin{thm}\label{tree_Hales_Jewett}
  Let $\Lambda$ be a finite alphabet and $\mathbf{T}$ a vector tree of infinite height. Then for every finite coloring of the set of the constant words $\w(\Lambda,\mathbf{T})$ over $\Lambda$ and every subspace $X$ of $\w(\Lambda,\mathbf{T})$ there exists a subspace $X'$ of $\w(\Lambda,\mathbf{T})$ with $X'\mik X$ such that the set $[X']_\Lambda$ is monochromatic.
\end{thm}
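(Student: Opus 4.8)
The plan is to deduce Theorem \ref{tree_Hales_Jewett} from a finite, one-dimensional pigeonhole principle via the machinery of the Abstract Ramsey Theorem (Theorem \ref{abstract_Ramsey}). The strategy mirrors the standard route by which infinite-dimensional Hales--Jewett results are extracted from their finite counterparts: one first isolates the ``one-step'' combinatorial statement, namely a Hales--Jewett-type theorem for constant words living on a single level-block $\otimes\mathbf{T}\upharpoonright[m,n)$ (this is the finite version alluded to in the introduction as Theorem \ref{tree_Hales_Jewett} in the finite setting, obtained by iterating the classical Theorem \ref{Hales_Jewett} over the finitely many coordinates and levels), and then amplifies it to the infinite-dimensional conclusion. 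First I would set up the Ramsey space. The natural choice is to let $\mathcal{R}=\mathcal{S}$ be the collection of all subspaces $X=(f_i)_{i=0}^\infty$ of $\w(\Lambda,\mathbf{T})$ as defined above, with $\mik$ the ``further subspace'' relation, and with $\mik^o$ essentially the same relation so that the set $\{A\in\mathcal{R}:A\mik^o X'\}$ captures exactly the words whose variable pieces are drawn from $X'$. The approximation maps $r_n,s_n$ would send a subspace to its first $n$ variable words $(f_i)_{i<n}$, so that $\mathcal{AR}=\mathcal{AS}$ is the set of finite compatible initial segments of subspaces.

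Next I would verify the four axioms \textbf{A.1}--\textbf{A.4} for this space. Axiom \textbf{A.1} (Sequencing) is immediate from the observation made in the excerpt that each nonempty word has a well-defined $\bt$ and $\tp$, so an infinite compatible sequence is faithfully recorded by its initial segments and these segments are strictly nested. For the finitization relations of \textbf{A.2} I would take $\mik_\mathrm{fin}$ and $\mik^o_\mathrm{fin}$ to be the finite analogues of the span-containment relation: a finite approximation $y$ is below $x$ when every word in the span of $y$ appears in the span of $x$, with the domination condition (ii) in the definition of subspace guaranteeing the finiteness requirement \textbf{A.2}(1) because only finitely many variable words of bounded $\tp$ can dominate $\mathbf{T}(i)$. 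Properties (2)--(5) are then bookkeeping about how spans of initial segments cohere, with (5) following from the fact that a shorter initial segment of a word can always be located as an initial segment of a witness. Axiom \textbf{A.3} (Amalgamation) reduces to the statement that variable words with prescribed depth can always be extended inside a given subspace, which holds because $\mathbf{T}$ has infinite height and no maximal nodes, so one can always find variable words on the next available level-block dominating the required level of $\mathbf{T}$.

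The heart of the argument, and the step I expect to be the main obstacle, is Axiom \textbf{A.4} (Pigeonhole). Here, given an approximation $a$ with $|a|=l$, a partition $\mathcal{O}\subseteq\mathcal{AR}_{l+1}$ of the possible $(l+1)$-st variable words, and a subspace $Y$, I must produce a further subspace $X\mik Y$ beyond the depth of $a$ on which the $(l+1)$-st coordinate is monochromatic for the induced coloring. This is precisely where the finite tree Hales--Jewett theorem enters: the choices for the next variable word range over constant-word patterns on a single level-block, and one colors each admissible next variable word $f_{l+1}$ by the class of $r_{l+1}(a{}^\con f_{l+1})$; applying the finite pigeonhole over a sufficiently tall block of $Y$ yields a combinatorial-line structure, i.e.\ a variable word within $Y$'s block, on which the color is constant, and this variable word becomes the $(l+1)$-st generator of $X$. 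The delicate points are (a) correctly arranging the finitely many coordinates $T_1,\dots,T_d$ and the finitely many levels so that a single application of the finite Hales--Jewett bound suffices, and (b) ensuring the resulting $f_{l+1}$ still dominates the appropriate level of $\mathbf{T}$ so that $X$ remains a legitimate subspace. Once \textbf{A.4} is established and $\mathcal{R}$ is checked to be closed in $\mathcal{AR}^\omega$ (which follows from the explicit description of subspaces as compatible sequences satisfying closed conditions), Theorem \ref{abstract_Ramsey} applies directly: the given finite coloring of $\w(\Lambda,\mathbf{T})$, being in particular Souslin measurable, admits a subspace $X'\mik X$ on which $\{A:A\mik^o X'\}$ is monochromatic, and unwinding the definitions shows this set is exactly $[X']_\Lambda$, completing the proof.
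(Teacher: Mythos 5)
There is a genuine gap, and it is structural: you have placed the Abstract Ramsey Theorem at the wrong level of the argument. Theorem \ref{tree_Hales_Jewett} is itself the pigeonhole of this paper. The authors prove it directly by combinatorial forcing (the ``largeness'' machinery of Sections \ref{section_large_sets}--\ref{section_proof_tree_Hales_Jewett}: Lemma \ref{pushing_large_lem}, Corollary \ref{full_large_cor} and Lemma \ref{getting_inside_large}, assembled into the equivalent Theorem \ref{tree_Hales_Jewett_2}), using only the classical finite Hales--Jewett Theorem \ref{Hales_Jewett}, applied infinitely many times inside an inductive construction. Only \emph{afterwards} do they build a Ramsey space and invoke Theorem \ref{abstract_Ramsey}, namely in Section \ref{section_infinite_Hales_Jewett} to prove the genuinely infinite-dimensional Theorem \ref{tree_Hales_Jewett_infinite}; and there the verification of axiom \textbf{A.4} consists precisely of an appeal to Theorem \ref{tree_Hales_Jewett_2}. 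So your route is circular unless you can supply an independent proof of \textbf{A.4}, which is exactly the content of the theorem you are trying to prove.

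The specific step that fails is your claim that \textbf{A.4} follows from ``a single application of the finite Hales--Jewett bound over a sufficiently tall block of $Y$.'' In the relevant Ramsey space the admissible $(l+1)$-st entries $f_{l+1}$ do not range over constant-word patterns on a single level-block: they range over a set of the form $[Y']_\Lambda=\bigcup_{q\geq 1}[(g_i)_{i=0}^{q-1}]_\Lambda$, i.e.\ over substitution instances of initial segments of \emph{every} finite length, hence over words of unbounded height ($\tp(f_{l+1})$ is not bounded). A coloring of this infinite set cannot be stabilized by one application of Theorem \ref{Hales_Jewett}; no ``finite tree Hales--Jewett theorem'' adequate for this task is stated or proved in the paper, and the one-dimensional statement you actually need here is equivalent to Theorem \ref{tree_Hales_Jewett} itself (this is the same phenomenon as in the classical case, where the pigeonhole for the infinite-dimensional Hales--Jewett space is the infinitary Carlson--Simpson/Karagiannis theorem, not the finite one). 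A secondary problem: with $\mathcal{R}=\mathcal{S}=$ the set of subspaces, Theorem \ref{abstract_Ramsey} would only give monochromaticity of a set of \emph{subspaces}, whereas the conclusion concerns a coloring of the countable set $\w(\Lambda,\mathbf{T})$ of individual constant words; to extract that you would need $\mathcal{R}$ to consist of infinite compatible sequences of constant words and then compose with a continuous ``initial-union'' map, which is exactly the setup the paper reserves for Theorem \ref{tree_Hales_Jewett_infinite}. To repair your proof you would have to replace the appeal to \textbf{A.4} by the largeness argument: show one color class is large in some $Y'\mik X$ (Fact \ref{mind_Ramsey_fact}), stabilize it under passing to $E_{\xx}$ (Corollary \ref{full_large_cor}), and then extract, level by level, variable words all of whose instances land in $E$ (Lemma \ref{getting_inside_large}).
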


\section{Subspaces}\label{section_subspaces}
For the proof of Theorem \ref{tree_Hales_Jewett} we shall need some additional notation concerning the subspaces.
Let us fix throughout this section a finite alphabet $\Lambda$ and a vector tree $\mathbf{T}$ of infinite height.
For notational simplicity we will write $\w$ (resp. $\w_v$) instead of $\w(\Lambda,\mathbf{T})$ (resp. $\w_v(\Lambda,\mathbf{T})$).
For a non-negative integer $\ell$, we define \[\w(\ell)=\{f\in\w:\;\text{if}\;f\neq\emptyset\;\text{then}\;\bt(f)=\ell\}\;\text{and}\;\w_v(\ell)=\{f\in\w_v:\;\bt(f)=\ell\}.\]

For the proof of Theorem \ref{tree_Hales_Jewett} we need to enlarge the class of the subspaces that we are looking at. Let $k,\ell$ be two non negative integers.
A compatible infinite sequence $X=(f_i)_{i=0}^\infty$ is called a $(k,\ell)$-subspace, if the following are satisfied.
\begin{enumerate}
  \item[(i)] For every non-negative integer $i$, we have that $f_i$ belongs to $\w_v$.
  \item[(ii)] For every non-negative integer $i$, we have that the vector level subset $\ws(f_i)$ dominates $\mathbf{T}(k+i)$.
  \item[(iii)] $\bt(f_0)=\ell$.
\end{enumerate}
Let as observe that if $X$ is a $(k,\ell)$-subspace, then $X$ is also a $(k',\ell)$-subspace for every non-negative integer $k'$ with $k'\mik k$.
Moreover, $X$ is a subspace if and only if $X$ is a $(0,k')$-subspace for some non-negative integer $k'$.
For a non-negative integer $\ell$, we say that $X$ is an $\ell$-subspace, if $X$ is a $(k,\ell)$-subspace
for some non-negative integer $k$. Thus, $X$ is a subspace if and only if $X$ is a $0$-subspace.
Moreover, a compatible finite sequence $\xx$ in $\w_v$ is called
a finite $(k,\ell)$-subspace, if there exists a $(k,\ell)$-subspace having $\xx$ as an initial segment. Similarly, a compatible finite sequence $\xx$ in $\w_v$ is called
a finite $\ell$-subspace, if there exists an $\ell$-subspace having $\xx$ as an initial segment.

Let us fix some non-negative integer $\ell$. We say that an $\ell$-subspace $Y$ is a further subspace of $X$, where $X$ is an $\ell$-subspace too, if $[Y]_\Lambda$ is a subset of $[X]_\Lambda$.
In this case we write $Y\mik X$.
Similarly, if $\yy$ is a finite $\ell$-subspace and
$X$ an $\ell$-subspace (resp. $\xx$ another finite $\ell$-subspace) we write $\yy\mik X$
(resp. $\yy\mik\xx$) if $[\yy]_\Lambda$ is subset of $[X]_\Lambda$ (resp. $[\xx]_\Lambda$).

Let $\ell$ be a non-negative integer. Also let $\xx$ and $\yy$ be two finite $\ell$-subspaces.
We say that $\yy$ extends $\xx$, we write $\xx\sqsubseteq\yy$, if $\xx$ is an initial segment of $\yy$.
Similarly, if $X$ is an $\ell$-subspace, we write $\xx\sqsubseteq X$ if $\xx$ is an initial segment of $X$. Moreover, we will say that $\xx$ is a by one extension of $\yy$, if $\xx$ extends $\yy$ and its length equals the length of $\yy$ plus one.

Let $\ell$ be a non-negative integer and $X=(f_i)_{i=0}^\infty$ be an $\ell$-subspace. Also let $\xx$
be a finite $\ell$-subspace such that $\xx\mik X$.
Then observe that there exists a unique positive integer $q$, such that $\xx\mik(f_i)_{i=0}^{q-1}$. We define $X/\xx=(f_i)_{i=q}^\infty$. Observe that $X/\xx$ is an $\ell'$-subspace, where $\ell'=\bt(f_q)$.

\section{Large sets}\label{section_large_sets}
The main notion that helps us to carry out the proof of Theorem \ref{tree_Hales_Jewett} is the one  of largeness. In this section, we include the definition of the large set and the results related to it. These results form the main part of the proof of Theorem  \ref{tree_Hales_Jewett}.

Let us fix throughout this section a finite alphabet $\Lambda$ and a vector tree $\mathbf{T}$ of infinite height.
\begin{defn}
  \label{defn_large}
  Let $\ell$ be a non-negative integer. Also let $E$ a subset of $\w(\ell)$ and $X$ an $\ell$-subspace.
  We say that $E$ is large in $X$ if for every further subspace $Y$ of $X$ we have that $E\cap[Y]_\Lambda\neq\emptyset$.
\end{defn}

We have the following easy to observe facts concerning the notion of largeness. The first claims that the notion of largeness is hereditary.
\begin{fact}
  \label{hereditary_fact}
  Let $\Lambda$ be a finite alphabet and $\mathbf{T}$ a vector tree of infinite height. Also let $\ell$ be a non-negative integer, $X$ an $\ell$-subspace and $E$ a subset of $\w(\ell)$ such that $E$ is large in $X$.
  Then for every $Y\mik X$, we have that $E$ is large in $Y$.
\end{fact}

The second fact establishes a mild Ramsey property for the notion of largeness.

\begin{fact}
  \label{mind_Ramsey_fact}
  Let $\Lambda$ be a finite alphabet and $\mathbf{T}$ a vector tree of infinite height.
  Also let $\ell$ be a non-negative integer, $X$ an $\ell$-subspace and $E$ a subset of $\w(\ell)$ such that $E$ is large in $X$.
  Finally, let $r$ be a positive integer and $E=\bigcup_{i=1}^{r}E_i$ a partition of $E$. Then there exist $i_0\in\{1,..,r\}$ and $Y\mik X$ such that $E_{i_0}$ is large in $Y$.
\end{fact}
We shall need some additional notation. Let $\ell$ be a non-negative integer and $E$ be a subset of $\w(\ell)$. Then for every $f$ in $\w(\ell)\cup\w_v(\ell)$ we set
\[E_f=\big\{g\in\w\big(\tp(f)\big):\;f'\cup g\in E\;\text{for all}\;f'\in[f]_\Lambda\big\}\]
if $f\neq\emptyset$ and $E_f=E$ otherwise.
Similarly, for a finite $\ell$-subspace $\xx$ we set
\[E_\xx=\bigcap_{f\in[\xx]_\Lambda}E_f.\]

%Under this terminology, by Definition \ref{defn_large}, we have the following immediate fact.

%\begin{fact}
%  \label{stable_fact}
%  Let $\Lambda$ be a finite alphabet and $\mathbf{T}$ be a vector tree.
%  Also let $\ell$ be a non-negative integer, $X$ a left $\ell$-subspace and $E$ a subset of $\w(\ell)$ such that $E$ is large in $X$. Then for every $f$ in $[X]_\Lambda$ %be a $\tp(f)$-subspace such that $[(f)^\con Y]_\Lambda$ is subset of $[X]$.
%  we have that $E_f$ is large in $X/(f)$.
%\end{fact}
We have the following lemma.
\begin{lem}
  \label{pushing_large_lem}
  Let $\Lambda$ be a finite alphabet and $\mathbf{T}$ a vector tree of infinite height.
  Also let $k,\ell$ be non-negative integers, $X$ an $\ell$-subspace and $E$ a subset of $\w(\ell)$ such that $E$ is large in $X$. Then there exist $g$ in $\w_v(\ell)$ and a  $\tp(g)$-subspace $Y$ satisfying the following.
  \begin{enumerate}
    \item[(a)] $[g]_\Lambda$ is a subset of $[X]_\Lambda$.
    \item[(b)] $\ws(g)$ dominates $\mathbf{T}(k)$.
    \item[(c)] $Y\mik X/(g)$.
    \item[(d)] $E_g$ is large in $Y$.
  \end{enumerate}
\end{lem}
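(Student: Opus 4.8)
The plan is to build $g$ by successively peeling off the initial variable words of $X$, substituting constants into all but the last of them and keeping the variable block of the last. Since $X$ is an $\ell$-subspace, fix $k_0$ with $X$ a $(k_0,\ell)$-subspace, so that the $i$-th variable word of $X$ has $\ws$-set dominating $\mathbf{T}(k_0+i)$. As $\mathbf{T}(k_0+i)$ dominates $\mathbf{T}(k)$ once $k_0+i\meg k$ (the tree has no maximal nodes, so every node at level $k$ has a successor at level $k_0+i$), after finitely many blocks the current $\ws$-set will dominate $\mathbf{T}(k)$, which is exactly what (b) asks for; the real task is to arrange the substitutions so that conditional largeness survives the peeling.

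First I would isolate the constant-peeling step: if $F\subseteq\w(\ell')$ is large in an $\ell'$-subspace $Z=(z_i)_i$, then $\bigcup_{c\in[z_0]_\Lambda}F_c$ is large in $Z/(z_0)$. Indeed, for any $W\mik Z/(z_0)$ the sequence $(z_0)^\con W$ is a further subspace of $Z$, so by largeness of $F$ some $c\in[z_0]_\Lambda$ and $w\in[W]_\Lambda$ satisfy $c\cup w\in F$, i.e. $w\in F_c$; as $W$ was arbitrary, $\bigcup_{c\in[z_0]_\Lambda}F_c$ meets every further span. Since $[z_0]_\Lambda$ is finite, Fact \ref{mind_Ramsey_fact} then produces a single $c_0\in[z_0]_\Lambda$ and $Z'\mik Z/(z_0)$ with $F_{c_0}$ large in $Z'$. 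Applying this repeatedly, and invoking Fact \ref{hereditary_fact} to carry largeness along the shrinking subspaces, I obtain constant words $c_0,\dots,c_{p-1}$ and a subspace $Y_p$ such that $E_{c_0\cup\cdots\cup c_{p-1}}$ is large in $Y_p$ and the first variable word $h$ of $Y_p$ has $\ws(h)$ dominating $\mathbf{T}(k)$, where $p$ is the first index at which such domination is reached.

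Set $g=c_0\cup\cdots\cup c_{p-1}\cup h$. Then $g\in\w_v(\ell)$, $\ws(g)=\ws(h)$ dominates $\mathbf{T}(k)$, every instance of $g$ is an instance of an initial segment of $X$ so $[g]_\Lambda\subseteq[X]_\Lambda$, and any $Y\mik Y_p/(h)$ satisfies $Y\mik X/(g)$; this gives (a)--(c). What remains, condition (d), is that $E_g=\bigcap_{c\in[h]_\Lambda}F_c$, where $F=E_{c_0\cup\cdots\cup c_{p-1}}$, is large in some such $Y$. This is the main obstacle, and it is genuinely harder than the constant-peeling step: the pigeonhole above produces, for a given $W$, only \emph{one} substitution instance $c\in[h]_\Lambda$ whose conditional set meets $[W]_\Lambda$, whereas (d) demands a single witness $w$ lying in $F_c$ \emph{simultaneously} for all of the $|\Lambda|^{|\otimes\ws(h)|}$ instances of $h$.

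To overcome this I would bring in the Hales--Jewett Theorem \ref{Hales_Jewett}. The idea is to encode the substitution instances of a variable block as constant words over $\Lambda$ and to color them according to which tail subspace of $Y_p/(h)$ captures a witness for the corresponding conditional set; a monochromatic combinatorial line then forces all instances arising from a single variable block into one largeness class, while the balancedness and domination structure of $\mathbf{T}$ allow me to keep $\ws(g)$ a single vector level subset still dominating $\mathbf{T}(k)$. Feeding the resulting uniform class back through Fact \ref{mind_Ramsey_fact} and Fact \ref{hereditary_fact} then yields a further subspace $Y$ in which $E_g$ is large. The finiteness of $\Lambda$ is essential precisely here, since it is what makes the Hales--Jewett pigeonhole available; synchronizing all substitutions while respecting the single-level constraint on $\ws(g)$ is the delicate point of the whole argument.
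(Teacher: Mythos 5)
Your overall architecture is right --- peel off a finite prefix, use Hales--Jewett to synchronize the substitution instances of a variable block, and finish with Fact \ref{mind_Ramsey_fact} --- and you correctly locate the crux in condition (d). But the step you leave as a sketch is exactly the step that carries the whole lemma, and the sketch as written does not work. First, a smaller point: in your constant-peeling lemma the witness produced by largeness of $F$ in $(z_0)^\con W$ lies in $[(z_0)^\con W]_\Lambda$, which contains $[z_0]_\Lambda$ itself, so you cannot conclude that it has the form $c\cup w$ with $w\in[W]_\Lambda$. This is repairable (pass instead to the further subspace $(c\cup w_0,w_1,\dots)$ for a fixed instance $c$ of $z_0$), but it signals the real issue: largeness only guarantees witnesses of \emph{some} length, whereas any Hales--Jewett coloring here needs witnesses of a \emph{prescribed shape}. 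The paper handles this with its Claim 1, which produces a finite subspace $\xx$ such that every one-block extension of $\xx$ in the relevant class already meets $E$; without such a statement your coloring is not even well defined.

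Second, and more seriously, your plan takes $h$ to be the first variable word of $Y_p$ and then tries to show that $\bigcap_{c\in[h]_\Lambda}F_c$ is large for that fixed $h$; your proposed coloring ``by which tail subspace of $Y_p/(h)$ captures a witness'' is not a finite coloring, and it does not respect the quantifiers: largeness of $E_g$ requires, for \emph{every} test subspace $W$, a single $w\in[W]_\Lambda$ that works for all instances of $h$ simultaneously, so the Hales--Jewett application must be performed separately for each $W$, and the variable word it outputs depends on $W$. The paper's resolution is to fix in advance a block $\mathbf{g}=(g_i)_{i=0}^{N-1}$ of $N=\mathrm{HJ}(|\Lambda|^{|\otimes\mathbf{T}(k)|},r)$ variable words, each with $|\otimes\ws(g_i)|=|\otimes\mathbf{T}(k)|$, so that combinatorial lines of $\big(\Lambda^{\otimes\mathbf{T}(k)}\big)^N$ correspond to variable words $h$ ranging over a \emph{finite} family $\mathcal{L}$ with $[h]_\Lambda\subseteq[\mathbf{g}]_\Lambda$; for each test subspace one colors the instances of $\mathbf{g}$ by the witnessing pair $(f_g,f'_g)\in[\xx]_\Lambda\times[w]_\Lambda$ (finitely many colors, well defined thanks to Claim 1), and concludes that the \emph{union} $\bigcup_{f,h}E_{f\cup h}$ over the finite index set $[\xx]_\Lambda\times\mathcal{L}$ is large. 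Only then does Fact \ref{mind_Ramsey_fact} extract a single $g=f\cup h$ and a $Y$ in which $E_g$ is large. These three ingredients --- the witness-shape claim, the long block with $h$ varying over a finite family rather than being fixed in advance, and largeness of the union before pigeonholing --- are absent from your sketch, and without them condition (d) does not follow.
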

\begin{proof}
  Let $\mathcal{X}$ be the set of all finite $\ell$-subspaces $\xx$ such that either $\xx$ is the
  empty sequence or $\xx\mik X$ and if $\xx=(f_i)_{i=0}^{q-1}$ then the cardinality of the set $\otimes\ws(f_i)$ is equal  to $\otimes\mathbf{T}(i)$ for all $0\mik i<q$.
  We have the following claim.

  \noindent\textbf{Claim 1:}
  There exists a finite $\ell$-subspace $\xx$ in $\mathcal{X}$ such that for every $\xx'$ in $\mathcal{X}$ extension by one of
  $\xx$ we have that $[\xx']_\Lambda\cap E\neq\emptyset$.
  \begin{proof}
    [Proof of Claim 1] Assume on the contrary that for every $\xx$ in $\mathcal{X}$
    there exists an extension by one
  $\xx'$ of $\xx$ in $\mathcal{X}$ such that $[\xx']_\Lambda\cap E=\emptyset$. Then we construct an infinite sequence
  $(\xx_n)_{n=0}^{\infty}$ in $\mathcal{X}$ satisfying for every non-negative integer $n$ the following.
  \begin{enumerate}
    \item[(i)] The length of $\xx_n$ is $n+1$.
    \item[(ii)] $\xx_n\sqsubseteq\xx_{n+1}$.
    \item[(iii)] $[x_n]_\Lambda\cap E=\emptyset$.
  \end{enumerate}
  Indeed let $\xx'_0$ to be the empty sequence. Then by our assumption there exists an extension by one $\xx_0$ of $\xx'_0$ in $\mathcal{X}$ such that $[\xx_0]_\Lambda\cap E=\emptyset$. By this choice (i) and (iii) above are satisfied, while (ii) is meaningless. Assume that for some non-negative integer $n$ the elements $\xx_0,...,\xx_n$ have been chosen properly. By our assumption there exists an extension by one $\xx_{n+1}$ of $\xx_n$ in $\mathcal{X}$ such that $[\xx_{n+1}]_\Lambda\cap E=\emptyset$. Clearly, $\xx_{n+1}$ is as desired.

  Let us observe that by property (ii)  there exists unique $\ell$-subspace $Y$ such that $\xx_n$ is an initial segment of $Y$ for all $n$. Moreover, by property (i), we have that $[Y]_\Lambda=\bigcup_{n=0}^\infty[\xx_n]_\Lambda$. Thus, invoking property (iii), we get that $[Y]_\Lambda\cap E=\emptyset$. Finally, for every non-negative integer $n$, since $\xx_n$ belongs to $\mathcal{X}$, we have that $\xx_n\mik X$. Thus $Y\mik X$. Let us observe that
  the existence of such a $Y$ contradicts that $E$ is large in $X$.
  \end{proof}

  Let $q$ be the length of $\xx$provided by Claim 1. We set $r=|\Lambda|^{|\otimes\mathbf{T}\upharpoonright q+1|}$. Observe that for every extension by one $\xx'$ of
  $\xx$ in $\mathcal{X}$, we have that $[\xx']_\Lambda$ is of cardinality $r$. Moreover, we set $N=\mathrm{HJ}(|\Lambda|^{|\otimes\mathbf{T}(k)|},r)$. We pick a compatible sequence $\mathbf{g}=(g_i)_{i=0}^{N-1}$ in $\w_v$ satisfying the following.
  \begin{enumerate}
    \item[(1)] $\mathbf{g}\mik X/\xx$.
    \item[(2)] $\ws(g_0)$ dominates $\mathbf{T}(k)$ and $\ws(g_{i+1})$ dominates $\ws(g_i)$ for all $0\mik i<N-1$.
    \item[(3)] $|\otimes\ws(g_i)|=|\otimes\mathbf{T}(k)|$ for all $0\mik i<N$.
  \end{enumerate}
  %\begin{enumerate}
  %  \item[(1)] The pair $(f_{q-1},g_0)$ is compatible.
  %  \item[(2)] $\ws(g_0)$ dominates $\mathbf{T}(k)$ and $\ws(g_{i+1})$ dominates $\ws(g_i)$ for all $0\mik i<N-1$.
  %  \item[(3)] $|\otimes\ws(g_i)|=|\otimes\mathbf{T}(k)|$ for all $0\mik i<N$.
  %  \item[(4)] $[g_i]_\Lambda$ is subset of $[X]_\Lambda$ for all $0\mik i<N$.
  %\end{enumerate}
  We set $\mathcal{L}$ to be the set of all $h$ in $\w_v$ such that $[h]_\Lambda$ is subset of $[\mathbf{g}]_\Lambda$ and $\ws(h)$ dominates $\mathbf{T}(k)$. Let us observe that for every $h$
  in $\mathcal{L}$ we have that $|\otimes\ws(h)|=|\otimes\mathbf{T}(k)|$ and the pair $(f,h)$ is compatible for every $f$ in $[\xx]_\Lambda$. Finally, by (1) above, we have that there exists a
  $\tp(g_{N-1})$-subspace $X'$ such that $[\mathbf{g}^\con X']_\Lambda$ is a subset of $[X/\xx]_\Lambda$.
 % We set $X'=X/\xx^\con\mathbf{g}$.
 %  We set $\ell_1=\tp(g_{N-1})$ and $X'=X/\big(\xx^\con(g_i)_{i=0}^{N-1}\big)$.
  %we pick an $\ell_1$-subspace $X'$ such that the set   $[\xx^\con(g_i)_{i=0}^{N-1\con}X']_\Lambda$ is subset of $[X]_\Lambda$.
  We have the following claim.

  \noindent \textbf{Claim 2:} The set $\bigcup_{f\in[\xx]_\Lambda,h\in\mathcal{L}}E_{f\cup h}$ is large in $X'$.
  \begin{proof}
    [Proof of Claim 2]
    Let $X''\mik X'$ be arbitrary. It suffices to find $f$ in $[\xx]_\Lambda$, $h$ in $\mathcal{L}$ and $f'$ in $[X'']_\Lambda$ such that $f'\in E_{f\cup h}$.
    Pick $w$ in $\w_v$ such that
    \begin{enumerate}
      \item[(a)] $[w]_\Lambda$ is a subset of $[X'']_\Lambda$.
      \item[(b)] The vector level subset $\ws(w)$ dominates $\mathbf{T}(q)$.
      \item[(c)] The cardinality of $\otimes\ws(w)$ equals the cardinality of $\otimes\mathbf{T}(q)$.
    \end{enumerate}
    Notice that for every $g$ in $[\mathbf{g}]_\Lambda$, we have that $\xx^\con (g\cup w)$ is an extension by one of $\xx$ in $\mathcal{X}$ and therefore, by Claim 1, there exists a pair $(f_g,f'_g)$ in the cartesian product $[\xx]_\Lambda\times[w]_\Lambda$ such that $f_g\cup g\cup f'_g$ belongs to $E$. Moreover, notice that $[\xx]_\Lambda\times[w]_\Lambda$ is of cardinality $r$.
    We set
\[\Lambda'=\big\{(a_\ttt)_{\ttt\in\otimes\mathbf{T}(k)}:\;a_\ttt\in\Lambda\;\text{for all}\;\ttt\in\otimes{T}(k)\big\}\]
and $\mathcal{A}$ to be the set of all sequence $(\mathbf{a}_i)_{i=0}^{N-1}$ of length $N$ with elements from $\Lambda'$.  For every $i$ in $\{0,...,N-1\}$ and $\sss$ in $\otimes\ws(g_i)$ we set
$\ttt_\sss$ to be the unique element of $\otimes\mathbf{T}(k)$ such that
$\ttt\mik_{\otimes\mathbf{T}}\ttt_\sss$.
We define a map $Q:\mathcal{A}\to[\mathbf{g}]_\Lambda$ setting
\[Q\Big(\big((a^i_\ttt)_{\ttt\in\otimes\mathbf{T}(k)}\big)_{i=0}^{N-1}\Big)=\bigcup_{i=0}^{N-1}g_i\big((a^i_{\ttt_\sss})_{\sss\in\otimes\ws(g_i)}\big),\]
for all $\big((a^i_\ttt)_{\ttt\in\otimes\mathbf{T}(k)}\big)_{i=0}^{N-1}$ in $\mathcal{A}$.
Observe that for every combinatorial line $\mathbb{L}$ of $\mathcal{A}$ we have that there exists unique element $h$ of $\mathcal{L}$ such that $Q[\mathbb{L}]=[h]_\Lambda$. %and, vice versa, for every $h$ in $\mathcal{L}$ there exists unique combinatorial line $\mathbb{L}$ of $\mathcal{X}$ such that $Q[\mathbb{L}]=[h]_v$.

We define a coloring $c:\mathcal{A}\to[\xx]_\Lambda\times[w]_\Lambda$ by the rule
\[c(\overline{\mathbf{a}})=(f_{Q(\overline{\mathbf{a}})},f'_{Q(\overline{\mathbf{a}})})\]
for all $\overline{\mathbf{a}}$ in $\mathcal{A}$.

By the choice of $r,N$ and the Hales--Jewett theorem, that is, Theorem \ref{Hales_Jewett}, we have that there exists a $c$-monochromatic combinatorial line $\mathbb{L}$ of $\mathcal{A}$. Let $h$ be the unique element of $\mathcal{L}$
satisfying $Q[\mathbb{L}]=[h]_\Lambda$. Let $f$ in $[\xx]_\Lambda$ and $f'$ in $[w]_\Lambda$ such that for every $\overline{\mathbf{a}}$ in $\mathbb{L}$, we have that $c(\overline{\mathbf{a}})=(f,f')$. Observe that for every $\overline{\mathbf{a}}$ in $\mathbb{L}$ we have that $f\cup Q(\overline{\mathbf{a}})\cup f'$ belongs to $E$. By the choice of $h$, we have for every $g\in[h]_\Lambda$ that $f\cup g\cup f'$ belongs to $E$. That is, $f'$ belongs to $E_{f\cup h}$. Finally, by (a), we have that $f'$ is element of $[X'']_\Lambda$. The proof of Claim 2 is complete.
  \end{proof}
By Claim 2 and Fact \ref{mind_Ramsey_fact}, there exist $f$ in $[\xx]_\Lambda$, $h$ in $\mathcal{L}$ and $Y\mik X'$ such that $E_{f\cup h}$ is large in $Y$. Setting $g=f\cup h$, it follows easily that $g$ and $Y$ are as desired.
\end{proof}

An iteration of the above theorem yields the following.
\begin{cor}
  \label{full_large_cor}
  Let $\Lambda$ be a finite alphabet and $\mathbf{T}$ a vector tree of infinite height.
  Also let $\ell$ be a non-negative integer, $X$  an $\ell$-subspace and $E$ a subset of $\w(\ell)$ such that $E$ is large in $X$.
  Then there exists an $\ell$-subspace $Y$ with $Y\mik X$ such that for every  finite $\ell$-subspace $\xx$ with $\xx\mik Y$ we have that $E_\xx$ is large in $Y/\xx$.
\end{cor}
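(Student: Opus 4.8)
The plan is to build $Y$ by a fusion that iterates Lemma \ref{pushing_large_lem}, and then to reduce the requirement over \emph{all} finite $\ell$-subspaces $\xx\mik Y$ to the canonical initial segments of $Y$ by a monotonicity observation. First I would record two elementary facts about the operation $E\mapsto E_\xx$. Unwinding the definitions of $[\cdot]_\Lambda$ and of the subscript operation gives the recursion $E_{\xx^\con(h)}=(E_\xx)_h$, valid for a finite $\ell$-subspace $\xx$ followed by a compatible variable word $h$. Moreover, if $[\xx]_\Lambda\subseteq[\xx']_\Lambda$ then $E_{\xx'}\subseteq E_\xx$, since the intersection defining $E_{\xx'}$ runs over a larger family of words. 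Together with the evident monotonicity of largeness (if $E\subseteq E'$ and $E$ is large in some subspace, then so is $E'$), these are the only structural inputs I would need beyond the Lemma and Fact \ref{hereditary_fact}.

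For the fusion I would set $X_0=X$, $\ell_0=\ell$ and $E^{(0)}=E$, so that $E^{(0)}$ is large in $X_0$. Given an $\ell_n$-subspace $X_n$ in which $E^{(n)}=E_{(g_0,\dots,g_{n-1})}\subseteq\w(\ell_n)$ is large, I would apply Lemma \ref{pushing_large_lem} with $k=n$ to produce $g_n\in\w_v(\ell_n)$ with $\ws(g_n)$ dominating $\mathbf{T}(n)$, together with a $\tp(g_n)$-subspace $X_{n+1}\mik X_n/(g_n)$ satisfying $[g_n]_\Lambda\subseteq[X_n]_\Lambda$ and $(E^{(n)})_{g_n}$ large in $X_{n+1}$. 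Putting $\ell_{n+1}=\tp(g_n)$ and $E^{(n+1)}=(E^{(n)})_{g_n}$, the recursion above identifies $E^{(n+1)}$ with $E_{(g_0,\dots,g_n)}$, so the invariant persists. The sequence $Y=(g_i)_{i=0}^\infty$ is compatible because $\tp(g_i)=\ell_{i+1}=\bt(g_{i+1})$, and the conditions $\bt(g_0)=\ell$ together with $\ws(g_i)$ dominating $\mathbf{T}(i)$ make it a $(0,\ell)$-subspace, hence an $\ell$-subspace.

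Next I would settle containment. From $[g_n]_\Lambda\subseteq[X_n]_\Lambda$ and $X_{n+1}\mik X_n/(g_n)$ a routine induction shows $[(g_n,\dots,g_{m-1})]_\Lambda\subseteq[X_n]_\Lambda$ for all $m\meg n$, whence $(g_i)_{i=n}^\infty\mik X_n$; in particular $Y\mik X_0=X$, and each tail $Y/(g_0,\dots,g_{n-1})=(g_i)_{i=n}^\infty$ is a further subspace of $X_n$. Now let $\xx$ be a finite $\ell$-subspace with $\xx\mik Y$. If $\xx=\emptyset$ then $E_\xx=E$ is large in $Y$ by Fact \ref{hereditary_fact}, since $E$ is large in $X$ and $Y\mik X$. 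Otherwise let $q$ be the unique positive integer with $\xx\mik(g_0,\dots,g_{q-1})$, so that $Y/\xx=(g_i)_{i=q}^\infty\mik X_q$ and $[\xx]_\Lambda\subseteq[(g_0,\dots,g_{q-1})]_\Lambda$. The invariant gives $E^{(q)}=E_{(g_0,\dots,g_{q-1})}$ large in $X_q$, so by Fact \ref{hereditary_fact} it is large in $Y/\xx$; since monotonicity yields $E_{(g_0,\dots,g_{q-1})}\subseteq E_\xx$, I conclude that $E_\xx$ is large in $Y/\xx$, as required.

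The main obstacle I anticipate is the fusion bookkeeping rather than any single deep step: one must verify carefully that the spans compose so that $Y\mik X$, and that each tail $(g_i)_{i=n}^\infty$ is a further subspace of the abstract $X_n$ in which largeness was secured, so that Fact \ref{hereditary_fact} applies at the final step. The point that keeps the argument short is the monotonicity of $E\mapsto E_\xx$ in its index: it lets every finite $\xx\mik Y$ be controlled through the single canonical chain $(g_0,\dots,g_{q-1})$ determined by $\tp(\xx)$, so that during the construction only these initial segments—handled automatically by the iteration of the Lemma—require attention.
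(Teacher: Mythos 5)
Your proposal is correct and follows essentially the same route as the paper: iterate Lemma \ref{pushing_large_lem} to fuse a sequence $(g_n)$ with $E_{(g_0,\dots,g_{n-1})}$ large in the ambient subspace at each stage, form $Y=(g_i)_{i=0}^\infty$, and reduce an arbitrary $\xx\mik Y$ to the canonical initial segment $(g_0,\dots,g_{q-1})$ via the monotonicity $E_{(g_0,\dots,g_{q-1})}\subseteq E_\xx$ together with Fact \ref{hereditary_fact}. Your explicit statement of the recursion $E_{\xx^\con(h)}=(E_\xx)_h$ is a point the paper leaves implicit, but it is the same argument.
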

\begin{proof}
  By an iterated use of Lemma \ref{pushing_large_lem}, we inductively construct a sequence $(g_n)_{n=0}^\infty$ in $\w_v$ and a sequence $(Y_n)_{n=0}^{\infty}$ with $Y_0=X$ satisfying for every non-negative integer $n$
  the following.
  \begin{enumerate}
    \item[(a)] $[g_n]_\Lambda$ is a subset of $[Y_n]_\Lambda$.
    \item[(b)] $Y_{n+1}\mik Y_n/(g_n)$.
    \item[(c)] $\ws(g_n)$ dominates $\mathbf{T}(n)$.
    \item[(e)] $E_{(g_i)_{i=0}^n}$ is large in $Y_{n+1}$.
  \end{enumerate}
  We set $Y=(g_i)_{i=0}^\infty$. Then $Y$ is as desired. Indeed, first, observe that
  $Y$ is an $\ell$-subspace with $Y\mik X$.
  Let
  $\yy_n=(g_i)_{i=0}^n$ for all $n\meg0$. Notice that for every non-negative integer $n$ we
  have that
  \begin{equation}
    \label{eq01}
    Y/\yy_n\mik Y_{n+1}.
  \end{equation}
  Finally, let $\xx$ be a finite $\ell$-subspace with $\xx\mik Y$.
  Then there exists some $n$ such that $\xx\mik \yy_n$. Since $[\xx]_\Lambda\subseteq[\yy_n]_\Lambda$, we have that $E_{\yy_n}\subseteq E_\xx$.
  Moreover, we have that $Y/\xx=Y/\yy_n$. Invoking (e)
  and \eqref{eq01}, we have that $E_\xx$ is large in $Y/\xx$.
\end{proof}

Finally, we will need the following lemma concerning the large sets.

\begin{lem}
    \label{getting_inside_large}
  Let $\Lambda$ be a finite alphabet and $\mathbf{T}$ a vector tree of infinite height.
  Also let $k,\ell$ be non-negative integers, $X$ an $\ell$-subspace and $E$ a subset of $\w(\ell)$ such that $E$ is large in $X$. Then there exists $f$ in $\w_v$ satisfying the following.
  \begin{enumerate}
    \item[(i)] The set $[f]_\Lambda$ is a subset of $E\cap[X]_\Lambda$.
    \item[(ii)] The vector level subset $\ws(f)$ dominates $\mathbf{T}(k)$.
  \end{enumerate}
\end{lem}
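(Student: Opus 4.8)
The plan is to read off $f$ from a single application of Lemma \ref{pushing_large_lem}, by gluing the variable word it produces to one constant word pulled from the large set it leaves behind. First I would apply Lemma \ref{pushing_large_lem} to the given data $k,\ell,X,E$. This provides a variable word $g$ in $\w_v(\ell)$ and a $\tp(g)$-subspace $Y$ with $[g]_\Lambda\subseteq[X]_\Lambda$, with $\ws(g)$ dominating $\mathbf{T}(k)$, with $Y\mik X/(g)$, and with $E_g$ large in $Y$. Conclusion (ii) of the present lemma will then be secured automatically, since the word I build will satisfy $\ws(f)=\ws(g)$.

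The next step is to extract a tail. Since $E_g$ is large in $Y$ and $\mik$ is reflexive, taking the further subspace to be $Y$ itself yields $E_g\cap[Y]_\Lambda\neq\emptyset$; I fix a word $h$ in this intersection. Here $h$ is a constant word, because every element of a span $[Y]_\Lambda$ lies in $\w$, and, being a nonempty element of $E_g\subseteq\w(\tp(g))$, it satisfies $\bt(h)=\tp(g)$, so the pair $(g,h)$ is compatible. I then set $f=g\cup h$. Because $h$ contributes no variable symbols, $f$ is again a variable word with the same work set, $\ws(f)=\ws(g)$, and each substitution takes the form $(g\cup h)(\mathbf{a})=g(\mathbf{a})\cup h$; hence $[f]_\Lambda=\{g'\cup h:\;g'\in[g]_\Lambda\}$. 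By the very definition of $E_g$, the membership $h\in E_g$ says exactly that $g'\cup h\in E$ for every $g'\in[g]_\Lambda$, so $[f]_\Lambda\subseteq E$. Moreover $[f]_\Lambda\subseteq[g]_\Lambda^\con[X/(g)]_\Lambda\subseteq[X]_\Lambda$, using $[g]_\Lambda\subseteq[X]_\Lambda$ together with $h\in[Y]_\Lambda\subseteq[X/(g)]_\Lambda$. This gives (i), while $\ws(f)=\ws(g)$ dominating $\mathbf{T}(k)$ gives (ii).

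The substantive combinatorics has already been spent inside Lemma \ref{pushing_large_lem}, so the only point I expect to require care is the bookkeeping identification that $f=g\cup h$ is a legitimate element of $\w_v$ with $\ws(f)=\ws(g)$, and that its span is precisely $\{g'\cup h:\;g'\in[g]_\Lambda\}$, i.e.\ that prepending the variable block $g$ to a fixed constant tail $h$ realizes exactly the operation encoded by the definition of $E_g$. Once this identification is in place, conclusion (i) is immediate from $h\in E_g$, and no appeal to Corollary \ref{full_large_cor} or to the Hales--Jewett theorem is needed for this lemma.
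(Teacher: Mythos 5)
Your proof is correct, and it takes a genuinely different route from the paper's. The paper proves Lemma \ref{getting_inside_large} from scratch by contradiction: assuming no such $f$ exists at any level $k'\meg k$, it inductively builds a compatible sequence $(f_n)_{n=0}^\infty$ whose span avoids $E$, invoking the Hales--Jewett theorem at each step of the construction to force alternative (I) over (II), and then contradicts largeness of $E$ in $X$. You instead derive the lemma as a quick corollary of Lemma \ref{pushing_large_lem}: a single application produces $g$ with $\ws(g)$ dominating $\mathbf{T}(k)$ and $E_g$ large in the tail subspace $Y$, whence $E_g\cap[Y]_\Lambda\neq\emptyset$ by reflexivity of $\mik$; picking $h$ in this intersection and setting $f=g\cup h$ gives $[f]_\Lambda=[g]_\Lambda{}^\con\{h\}\subseteq E$ directly from the definition of $E_g$, and $[f]_\Lambda\subseteq[X]_\Lambda$ from $[g]_\Lambda\subseteq[X]_\Lambda$ together with $h\in[Y]_\Lambda\subseteq[X/(g)]_\Lambda$. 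All the bookkeeping you flag does check out: $h$ is a nonempty constant word with $\bt(h)=\tp(g)$, so $g\cup h$ is a legitimate element of $\w_v$ with $\ws(f)=\ws(g)$, and there is no circularity since Lemma \ref{pushing_large_lem} is proved independently. The trade-off is that your argument is shorter and reuses the combinatorial work already spent in Lemma \ref{pushing_large_lem} (where Hales--Jewett is applied), while the paper's version is self-contained and does not route through the notion $E_g$ or the quotient $X/(g)$; the two constructions also place the variable block differently (yours at the bottom with a constant tail, the paper's at the top of a constant head), but either form satisfies conclusions (i) and (ii).
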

\begin{proof}
  Let as assume on the contrary. Then for every integer $k'\meg k$, since $\mathbf{T}(k')$ dominates $\mathbf{T}(k)$, there is no $f$ in $\w_v$ satisfying condition (i) above such that the vector level subset $\ws(f)$ dominates $\mathbf{T}(k')$.

  We construct inductively a compatible sequence $(f_n)_{n=0}^\infty$ in
  $\w_v$ satisfying the following.
  \begin{enumerate}
    \item[(C1)] The set $[f_0]_\Lambda$ is a subset of $[X]_\Lambda\setminus E$.
    \item[(C2)] For every positive integer $n$ and every $g$ in $[(f_i)_{i=0}^{n-1} ]_\Lambda$, we have that the set $[g\cup f_n]_\Lambda$ is a subset of $[X]_\Lambda\setminus E$.
    \item[(C3)] The vector level subset $\ws(f_n)$ dominates $\mathbf{T}(k+n)$, for all $n$ in $\omega$.
  \end{enumerate}
  First, let us observe that the above construction leads to a contradiction and therefore completes the proof of the lemma. Indeed, set $Y=(f_n)_{n=0}^\infty$. Since $X$ in an $\ell$-subspace, by (C1), we have, in particular, that $f_0$ belongs to $\w(\ell)$. Invoking (C3), we get that $Y$ is an $\ell$-subspace. Observe that for every positive integer $n$ we have that
  \[
        \big[(f_i)_{i=0}^{n}\big]_\Lambda=\bigcup_{g\in \big[(f_i)_{i=0}^{n-1}\big]_\Lambda}\{g\}^\con[f_n]_\Lambda.
  \]
  Thus, invoking (C1) and (C2), we have that $Y\mik X$ and $[Y]_\Lambda\cap E=\emptyset$,
  which contradicts the largeness of $E$ in $X$.

  We will describe the inductive step of the construction. The initial step is similar to the general one.
  Let as assume that for some positive integer $n$ the words $f_0,...,f_{n-1}$ have been chosen properly.

  Let $\xx=(f_i)_{i=0}^{n-1}$ and let $r$ be the cardinality of the set $[\xx]_\Lambda$. We set $N=\mathrm{HJ}(|\Lambda|^{|\otimes \mathbf{T}(k+n)|},2^r)$
   and
\[\Lambda'=\big\{(a_\ttt)_{\ttt\in\otimes\mathbf{T}(k+n)}:\;a_\ttt\in\Lambda\;\text{for all}\;\ttt\in\otimes\mathbf{T}(k+n)\big\}.\]
We define $\mathcal{A}$ to be the set of all sequence $\overline{\mathbf{a}}=(\mathbf{a}_i)_{i=0}^{N-1}$ of length $N$ with elements from $\Lambda'$. Moreover, we set $\mathcal{C}$ to be the set of all maps from $[\xx]_\Lambda$ into $\{0,1\}$. Clearly $\mathcal{C}$ is of cardinality $2^r$.

We pick a compatible sequence $\mathbf{h}=(h_i)_{i=0}^{N-1}$ in $\w_v$ satisfying the following.
\begin{enumerate}
  \item[(a)] The pair $(f_{n-1}, h_0)$ is compatible.
 % \item[(b)] The pair $(h_i,h_{i+1})$ is compatible for all $0\mik i<M-1$.
  \item[(b)] The set $[\mathbf{h}]_\Lambda$ is a subset of $[X/\xx]_\Lambda$.
  \item[(c)] The vector level subset $\ws(h_0)$ dominates $\mathbf{T}(k+n)$.
  \item[(d)] The vector level subset $\ws(h_{i+1})$ dominates $\ws(h_{i})$ for all $0\mik i<M-1$.
  \item[(e)] $|\otimes\ws(h_i)|=|\otimes\mathbf{T}(k+n)|$ for all $0\mik i<M$.
\end{enumerate}
For every $i$ in $\{0,...,N-1\}$ and $\sss$ in $\otimes\ws(f_i)$ we set
$\ttt_\sss$ to be the unique element of $\otimes\mathbf{T}(k+n)$ such that
$\ttt_\sss\mik_{\otimes\mathbf{T}}\sss$.
We define a map $Q:\mathcal{A}\to\big[\mathbf{h}\big]_\Lambda$ setting
\[Q\Big(\big((a^i_\ttt)_{\ttt\in\otimes\mathbf{T}(k+n)}\big)_{i=0}^{N-1}\Big)=\bigcup_{i=0}^{N-1}h_i\big((a^i_{\ttt_\sss})_{\sss\in\otimes\ws(g_i)}\big),\]
for all $\big((a^i_\ttt)_{\ttt\in\otimes\mathbf{T}(k)}\big)_{i=0}^{N-1}$ in $\mathcal{A}$.

We define a coloring $c:\mathcal{A}\to\mathcal{C}$ as follows. For every $\overline{\mathbf{a}}$ in $\mathcal{A}$ and $g$ in $[\xx]_\Lambda$, we set $c(\overline{\mathbf{a}})(g)=0$ if $g\cup Q(\overline{\mathbf{a}})\not\in E$ and $c(\overline{\mathbf{a}})(g)=1$ otherwise.
By the the Hales--Jewett theorem, that is, Theorem \ref{Hales_Jewett}, we have that there exists a $c$-monochromatic combinatorial line $\mathbb{L}$ of $\mathcal{A}$.
Let $f_n$ be the unique element of $\w_v$ such that $[f_n]_\Lambda=\{Q(\overline{\mathbf{a}}):\;\overline{\mathbf{a}}\in\mathbb{L}\}$.
By (a), we have that $(f_i)_{i=0}^n$ is compatible.
By (b), we get that
\begin{equation}
  \label{eq02}
  \big[(f_i)_{i=0}^n\big]_\Lambda\subseteq[X]_\Lambda.
\end{equation}
By (c) and (d), we have that (C3) is satisfied.
By \eqref{eq02}, the fact that $\mathbb{L}$ is $c$-monochromatic and the definition of the coloring $c$, we have for every $g$ in $[\xx]_\Lambda$ that either
\begin{enumerate}
  \item[(I)] the set $[g\cup f_n]_\Lambda$ is subset of $[X]_\Lambda\setminus E$, or
  \item[(II)] the set $[g\cup f_n]_\Lambda$ is subset of $E\cap[X]_\Lambda$.
\end{enumerate}
Observe that the second alternative contradicts our initial assumption. Thus (C2) is satisfied. The proof of the inductive step of the construction is complete as well as the proof of the lemma.
\end{proof}

\section{Proof of Theorem \ref{tree_Hales_Jewett}}\label{section_proof_tree_Hales_Jewett}
 Let $\Lambda$ be a finite alphabet and $\mathbf{T}$
a vector tree of infinite height. Also let $\ell$ be a non-negative integer. We set
$\w(\Lambda,\mathbf{T},\ell)=\{f\in\w(\Lambda,\mathbf{T}):\;\text{if}\;f\neq\emptyset\;\text{then}\;\bt(f)=\ell\}$.
Actually, we will prove the following equivalent form of Theorem \ref{tree_Hales_Jewett}.

\begin{thm}\label{tree_Hales_Jewett_2}
  Let $\Lambda$ be a finite alphabet, $\mathbf{T}$ a vector tree of infinite height and $\ell$ a non-negative integer. Then for every finite coloring of the set $\w(\Lambda,\mathbf{T},\ell)$ and every $\ell$-subspace $X$ there exists an $\ell$-subspace $X'$ with $X'\mik X$ such that the set $[X']_\Lambda$ is monochromatic.
\end{thm}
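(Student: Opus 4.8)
The plan is to run a fusion argument on top of the largeness machinery of Section~\ref{section_large_sets}, so that all the genuine combinatorics (the appeals to Hales--Jewett) is already discharged by Lemma~\ref{pushing_large_lem}, Corollary~\ref{full_large_cor} and Lemma~\ref{getting_inside_large}. Write the given finite coloring as a partition $\w(\Lambda,\mathbf{T},\ell)=\bigcup_{i=1}^{r}E_i$. The first observation is that $\w(\Lambda,\mathbf{T},\ell)=\w(\ell)$ is itself large in $X$: for every further subspace $Y\mik X$ the span $[Y]_\Lambda$ is a nonempty set of constant words, each of bottom $\ell$, so $\w(\ell)\cap[Y]_\Lambda=[Y]_\Lambda\neq\emptyset$. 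Applying the mild Ramsey property, Fact~\ref{mind_Ramsey_fact}, to this partition yields an index $i_0$ and a further subspace $Y_0\mik X$ such that $E_{i_0}$ is large in $Y_0$. Writing $E:=E_{i_0}$, it now suffices to produce an $\ell$-subspace $X'\mik Y_0$ with $[X']_\Lambda\subseteq E$.

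Next I would stabilize largeness along all initial segments by applying Corollary~\ref{full_large_cor} to $E$ (large in $Y_0$). This produces an $\ell$-subspace $Y\mik Y_0$ such that $E_\xx$ is large in $Y/\xx$ for \emph{every} finite $\ell$-subspace $\xx\mik Y$. This is the key device: at each stage of the construction I may invoke largeness of the localized set $E_\xx$ for free, with no further Ramsey-type step.

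Then I would build $X'=(h_n)_{n=0}^{\infty}$ by recursion, maintaining the invariant that $\xx_n:=(h_i)_{i=0}^{n-1}$ is a finite $\ell$-subspace with $\xx_n\mik Y$ (starting from the empty sequence, where $E_{\xx_0}=E$ and $Y/\xx_0=Y$). Given $\xx_n$, the set $E_{\xx_n}$ is large in the $\tp(h_{n-1})$-subspace $Y/\xx_n$ by the previous paragraph, so Lemma~\ref{getting_inside_large}, applied with parameter $k=n$, furnishes a variable word $h_n\in\w_v$ with $[h_n]_\Lambda\subseteq E_{\xx_n}\cap[Y/\xx_n]_\Lambda$ and with $\ws(h_n)$ dominating $\mathbf{T}(n)$. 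Since $[h_n]_\Lambda\subseteq[Y/\xx_n]_\Lambda$ forces $\bt(h_n)=\tp(h_{n-1})$, the sequence $\xx_{n+1}=\xx_n^\con(h_n)$ is compatible; and because an element of $[\xx_n]_\Lambda$ comes from the first blocks of $Y$ while an element of $[h_n]_\Lambda$ comes from later blocks, their concatenation again lies in $[Y]_\Lambda$, which is the routine check that $\xx_{n+1}\mik Y$ and preserves the invariant.

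Finally I would verify that $X'=(h_n)_n$ is as required. The domination conditions $\ws(h_n)$ dominating $\mathbf{T}(n)$, together with $\bt(h_0)=\ell$, make $X'$ a $(0,\ell)$-subspace, hence an $\ell$-subspace, and the invariant gives $X'\mik Y\mik X$. For monochromaticity, take any $g\in[X']_\Lambda$, say $g=g_0\cup\dots\cup g_{q-1}$ with $g_i\in[h_i]_\Lambda$. Then $g_{q-1}\in[h_{q-1}]_\Lambda\subseteq E_{\xx_{q-1}}=\bigcap_{f\in[\xx_{q-1}]_\Lambda}E_f$, and since the constant word $g_0\cup\dots\cup g_{q-2}$ lies in $[\xx_{q-1}]_\Lambda$, the definition of $E_{g_0\cup\dots\cup g_{q-2}}$ gives $g=(g_0\cup\dots\cup g_{q-2})\cup g_{q-1}\in E$. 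Hence $[X']_\Lambda\subseteq E=E_{i_0}$. I expect the only delicate point in the whole argument to be precisely this level bookkeeping, namely choosing the domination parameter $k=n$ at the $n$-th step and checking that the recursively chosen $h_n$ assemble into a legitimate $\ell$-subspace with $X'\mik X$; the substantive content has already been isolated in the lemmas cited above.
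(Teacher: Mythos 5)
Your proposal is correct and follows essentially the same route as the paper: reduce to a single large color via Fact~\ref{mind_Ramsey_fact}, stabilize with Corollary~\ref{full_large_cor}, and then iterate Lemma~\ref{getting_inside_large} to build $X'$ block by block (your $E_{\xx_n}$ is exactly the paper's iterated $E_{n+1}=(E_n)_{f_n}$, and $Y/\xx_n$ its $Y_{n+1}=Y_n/(f_n)$). The only difference is that you spell out the final monochromaticity check, which the paper leaves to the reader.
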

\begin{proof}
  Let us fix a finite coloring of $\w(\Lambda,\mathbf{T},\ell)$. Then by Fact \ref{mind_Ramsey_fact} there exists an $\ell$-subspace $Y'$ with $Y'\mik X$ such that one of the colors forms a subset $E$ of $\w(\Lambda,\mathbf{T},\ell)$ which is large in $Y'$. Applying Corollary \ref{full_large_cor}, we obtain an $\ell$-subspace $Y$ with $Y\mik Y'$ such that for every finite $\ell$-subspace $\xx$ with $\xx\mik Y$ we have that $E_\xx$ is large in $Y/\xx$.

  Using Lemma \ref{getting_inside_large}, we inductively construct a compatible sequence
  $(f_n)_{n=0}^\infty$ in $\w_v(\Lambda,\mathbf{T})$,
a sequence $(Y_n)_{n=0}^\infty$ with $Y_0=Y$
and a sequence $(E_n)_{n=0}^\infty$ with $E_0=E$ satisfying the following for every non-negative integer $n$.
\begin{enumerate}
%    \item[(i)] The pair $(f^1_n,f^2_n)$ is compatible.
    \item[(i)] The set $[f_n]_\Lambda$ is a subset of $E_n\cap[Y_n]_\Lambda$.
    \item[(ii)] The vector level subset $\ws(f_n)$ dominates $\mathbf{T}(n)$.
    \item[(iii)] $Y_{n+1}=Y_n/(f_n)$ and $E_{n+1}=(E_n)_{f_n}$.
  \end{enumerate}
  Setting $X'=(f_n)_{n=0}^\infty$, it is easy to see that $X'$ is as desired.
\end{proof}

Clearly, Theorem \ref{tree_Hales_Jewett_2} has Theorem \ref{tree_Hales_Jewett} as an immediate consequence. However, these two statements are equivalent.

\section{Infinite dimensional version of Theorem \ref{tree_Hales_Jewett}}\label{section_infinite_Hales_Jewett}
In this section we obtain the infinite dimensional version of Theorem \ref{tree_Hales_Jewett}, that is, Theorem \ref{tree_Hales_Jewett_infinite} below, by an application of the abstract Ramsey theory (see Subsection \ref{abstract_Ramsey_theory}). To state Theorem \ref{tree_Hales_Jewett_infinite} we need some pieces of notation.

Let $\Lambda$ be a finite alphabet and $\mathbf{T}$ a vector tree of infinite height.
We define $\w^{[\infty]}(\Lambda,\mathbf{T})$ to be the set of all compatible sequences $(f_n)_{n=0}^\infty$ in $\w(\Lambda,\mathbf{T})\setminus\{\emptyset\}$ with $\bt(f_0)=0$.
The set $\w^{[\infty]}(\Lambda,\mathbf{T})$ forms a subset of the set $\big(\w(\Lambda,\mathbf{T})\big)^\omega$ of all sequences in $\w(\Lambda,\mathbf{T})$.
We endow the set $\w(\Lambda,\mathbf{T})$ with the discrete topology and $\big(\w(\Lambda,\mathbf{T})\big)^\omega$ with the corresponding product topology.
We consider $\w^{[\infty]}(\Lambda,\mathbf{T})$ with the induced topology.
Moreover, for every subspace $X$ we define $[X]^{[\infty]}_\Lambda$ to be the set of all
elements $(f_n)_{n=0}^\infty$ of $\w^{[\infty]}(\Lambda,\mathbf{T})$ such that for every positive integer $n$ we have that $f_0\cup...\cup f_{n-1}$ belongs to $[X]_\Lambda$.
We have the following infinite dimensional version of Theorem \ref{tree_Hales_Jewett}.
\begin{thm}
  \label{tree_Hales_Jewett_infinite}
  Let $\Lambda$ be a finite alphabet and $\mathbf{T}$ a vector tree of infinite height. Then for every finite Souslin measurable coloring of the set $\w^{[\infty]}(\Lambda,\mathbf{T})$ and every subspace $X$ there exists a subspace $X'$ with $X'\mik X$ such that the set $[X']^{[\infty]}_\Lambda$ is monochromatic.
\end{thm}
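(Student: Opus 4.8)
The plan is to deduce Theorem \ref{tree_Hales_Jewett_infinite} from the Abstract Ramsey Theorem, that is, Theorem \ref{abstract_Ramsey}, by exhibiting an appropriate Ramsey space whose pigeonhole principle is precisely the finite-dimensional result Theorem \ref{tree_Hales_Jewett} (equivalently Theorem \ref{tree_Hales_Jewett_2}). Concretely, I would set $\mathcal{R}=\w^{[\infty]}(\Lambda,\mathbf{T})$ and $\mathcal{S}$ to be the set of all subspaces of $\w(\Lambda,\mathbf{T})$. The relation $\mik$ on $\mathcal{S}$ is the further-subspace relation already defined in the excerpt, and for $A=(f_n)_{n}\in\mathcal{R}$ and $X\in\mathcal{S}$ I would declare $A\mik^o X$ to mean that $f_0\cup\dots\cup f_{n-1}\in[X]_\Lambda$ for every $n\meg 1$, so that $\{A\in\mathcal{R}:A\mik^o X\}=[X]^{[\infty]}_\Lambda$; this makes the conclusion of Theorem \ref{abstract_Ramsey} literally the conclusion we want. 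For the approximation functions I would let $r_n(A)=(f_0,\dots,f_{n-1})$ record the first $n$ words of $A$, and let $s_n(X)$ record the initial segment of the subspace $X$ of length $n$; the finite approximation sets $\mathcal{AR}$ and $\mathcal{AS}$ are then the finite $0$-subspaces (finite compatible sequences), and the relations $\mik^o_{\mathrm{fin}}$, $\mik_{\mathrm{fin}}$ are the finite span-containment relations $\mik$ on finite subspaces introduced in Section \ref{section_subspaces}.

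Having fixed this data, the bulk of the work is a verification of axioms \textbf{A.1}--\textbf{A.4}. Axiom \textbf{A.1} (Sequencing) is essentially bookkeeping: two distinct subspaces (or infinite-dimensional points) differ at some finite stage, and the functions $r_n,s_n$ were defined precisely as initial segments, so coherence is automatic. Axiom \textbf{A.2} (Finitization) reduces $\mik$ and $\mik^o$ to their finite counterparts; here I would use that $[X]_\Lambda=\bigcup_n[(s_n(X))]_\Lambda$ together with the compactness-type fact that each finite word of $[X]_\Lambda$ lies in the span of a single finite initial segment of $X$, which gives the finiteness clause (1) and the two-sided implications (2),(3); clauses (4),(5) are transitivity/coherence statements for the finite relations. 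Axiom \textbf{A.3} (Amalgamation) asserts that finite approximations can be realized and restricted within given subspaces; this follows from the fact that any finite $0$-subspace $\xx$ with $\xx\mik X$ can be extended to a full subspace $\mik X$, i.e. subspaces are "tree-like'' and every admissible finite piece has a completion below any $Y$ of sufficient depth.

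The crucial axiom is \textbf{A.4} (Pigeonhole), and this is exactly where Theorem \ref{tree_Hales_Jewett_2} enters. Fixing $a=(f_0,\dots,f_{l-1})\in\mathcal{AR}_l$, a subset $\mathcal{O}\subseteq\mathcal{AR}_{l+1}$, and $Y$ with $[a,Y]\neq\emptyset$, the set $\mathcal{O}$ amounts to a two-coloring of the possible "by one'' extensions of $a$ inside $Y$; after passing to the residual subspace $Y/a$, which is an $\ell$-subspace for $\ell=\tp(f_{l-1})$, such an extension is determined by a single word $g\in[Y/a]_\Lambda$ with $\ws(g)$ dominating the appropriate level. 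Thus the coloring of extensions is a finite coloring of (a large set of) words, and a direct application of Theorem \ref{tree_Hales_Jewett_2} produces a further $\ell$-subspace $X\mik Y/a$ on which all relevant $(l+1)$-approximations receive a single color, which is the required homogeneity $r_{l+1}[a,X]\subseteq\mathcal{O}$ or $\subseteq\mathcal{O}^c$. I expect this translation between "monochromatic span of words'' and "monochromatic set of $(l+1)$-approximations'' to be the main obstacle, since one must check that the homogenization delivered by Theorem \ref{tree_Hales_Jewett_2} respects the level-domination and compatibility constraints built into $\mathcal{AR}_{l+1}$. Finally, $\mathcal{R}=\w^{[\infty]}(\Lambda,\mathbf{T})$ is closed in $\mathcal{AR}^\omega$ because compatibility and the condition $\bt(f_0)=0$ are closed conditions in the product topology; with all hypotheses of Theorem \ref{abstract_Ramsey} met, applying it to the given Souslin measurable coloring and the given subspace $X$ yields $X'\mik X$ with $[X']^{[\infty]}_\Lambda=\{A:A\mik^o X'\}$ monochromatic, completing the proof.
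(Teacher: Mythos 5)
Your proposal is correct and follows essentially the same route as the paper: the identical Ramsey space $(\mathcal{R},\mathcal{S},\mik,\mik^o,r,s)$ with initial-segment approximations and span-containment finitizations, verification of \textbf{A.1}--\textbf{A.3} as routine, \textbf{A.4} deduced from Theorem \ref{tree_Hales_Jewett_2} by coloring the by-one extensions of $a$ via the corresponding words in the residual $\ell$-subspace of $Y$, and a final appeal to Theorem \ref{abstract_Ramsey}. The only slip is terminological: the elements of $\mathcal{AR}$ are finite compatible sequences of \emph{constant} words (initial segments of elements of $\w^{[\infty]}(\Lambda,\mathbf{T})$), not finite $0$-subspaces, which by definition consist of variable words and approximate elements of $\mathcal{S}$.
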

\begin{proof}
  We will built an appropriate Ramsey space such that an application of Theorem \ref{abstract_Ramsey} will imply the result.
  We set $\mathcal{S}$ to be the set of all
  subspaces of $\w(\Lambda,\mathbf{T})$ and $\mathcal{R}=\w^{[\infty]}(\Lambda,\mathbf{T})$.
  For $X,Y$ in $\mathcal{S}$ we set $X\mik Y$, if $[X]_\Lambda\subseteq[Y]_\Lambda$, as usual.
  For $X$ in $\mathcal{S}$ and $A$ in $\mathcal{R}$, we set $A\mik^o X$ if
  $A\in[X]_\Lambda^{[\infty]}$. We set $\mathcal{AR}$ to be the set of all finite sequences $a$, possibly empty,
  in $\w(\Lambda,\mathbf{T})$ such that $a$ is an initial segment of some $A$ in $\mathcal{R}$.
  Similarly, we set $\mathcal{AS}$ to be the set of all finite sequences $x$, possibly empty, in
  $\w_v(\Lambda,\mathbf{T})$ such that $x$ is an initial segment of some $X$ in $\mathcal{S}$.
  We define $r:\mathcal{R}\times\omega\to\mathcal{AR}$,
  setting $r(A,n)$ to be the initial segment of $A$ of length $n$, for all $A$ in $\mathcal{R}$
  and $n$ in $\omega$. Similarly, we define $s:\mathcal{S}\times\omega\to\mathcal{AS}$,
  setting $s(X,n)$ to be the initial segment of $X$ of length $n$, for all $X$ in $\mathcal{S}$
  and $n$ in $\omega$. Clearly, $(\mathcal{R},\mathcal{S},\mik,\mik^o, r,s)$ is a Ramsey space.
  Moreover it is easy to observe that axiom \textbf{A.1} is satisfied and $\mathcal{S}$ is a closed subset of $\mathcal{AS}^\omega$.

  We define the binary relation $\mik_\mathrm{fin}$ on $\mathcal{AS}$ as follows. For $x,y$ in
  $\mathcal{AS}$ we set $x\mik_\mathrm{fin}y$ if either both $x,y$ are the empty sequence or both $x,y$ are not the empty sequence and $[x]_\Lambda\subseteq[y]_\Lambda$. We define the binary relation $\mik_\mathrm{fin}^o$ as follows. For $x$ in $\mathcal{AS}$ and $a$ in $\mathcal{AR}$ we set $a\mik_\mathrm{fin}^ox$ if either both $a$ and $x$ are the empty sequence or both $a,x$ are not the empty sequence and $\cup a\in[x]_\Lambda$, where by $\cup a$ we denote the union of the elements of $a$.

  Under these definitions, the validity of axioms \textbf{A.2} and \textbf{A.3} is straightforward. Axiom \textbf{A.4} follows by Theorem \ref{tree_Hales_Jewett_2}.
  Indeed, let $a$ in $\mathcal{AR}$ and $Y$ in $\mathcal{S}$ with $[a,Y]\neq\emptyset$.
  Also let $\mathcal{O}$ be a subset of $\mathcal{AR}_{l+1}$, where $l=|a|$.
  Moreover, let $a=(f_i)_{i=0}^{|a|-1}$ and  $\ell=\tp(f_{|a|-1})$ if $a$ is not the empty sequence and $\ell=0$ otherwise. Finally, let $Y=(g_i)_{i=0}^\infty$. Observe that $Y'=(f_i)_{i=\mathrm{depth}_Y(a)}^\infty$ is an $\ell$-subspace.
  Moreover, for every $b$ in $r_{l+1}[a,Y]$ there exists unique $f_b$ in $[Y']_\Lambda$ such that $b=a^\con(f_b)$ and vice versa, for every $f$ in $[Y']_\Lambda$ we have that $a^\con(f)$ belongs to $r_{l+1}[a,Y]$. We define the following coloring $c:\w(\Lambda,\mathbf{T})\to\{0,1\}$.
  First we define $c$ on $[Y']_\Lambda$. For every $f$ in $[Y']_\Lambda$ we set
  \[c(f)=1\;\text{if and only if}\;a^\con(f)\in\mathcal{O}.\]
  We arbitrarily extend $c$ on $\w(\Lambda,\mathbf{T})$. Applying Theorem \ref{tree_Hales_Jewett_2} we obtain a further subspace $X'$ of $Y'$ and $j$ in $\{0,1\}$ such that $c(A)=j$ for all $A$ in $[X']_\Lambda$. Modulo passing to a further subspace, we may assume that $X'$ is a $(\mathrm{depth}_Y(a),\ell)$-subspace. Then $X=r\big(Y,\mathrm{depth}_Y(a)\big)^\con X'$ is a subspace and, in particular, belongs to $[\mathrm{depth}_Y(a),Y]$. Let $\mathcal{O}_0=\mathcal{O}^c$ and $\mathcal{O}_1=\mathcal{O}$.
  Observe that for every $b$ in $r_{l+1}[a,X]$ we have that $f_b\in[X']_\Lambda$ and therefore, by the choice of $X'$ and  the definition of the coloring $c$ we have that $b\in\mathcal{O}_j$. That is, $r_{l+1}[a,X]\subseteq\mathcal{O}_j$.

  By the above, we have that the assumptions of Theorem \ref{abstract_Ramsey} are satisfied.
  The result follows clearly by Theorem \ref{abstract_Ramsey} in this setting.
\end{proof}

\section{Consequences}\label{section_consequences}
Our first consequence of Theorem \ref{tree_Hales_Jewett_infinite} concerns words of infinite support. To state it we need some pieces of notation. Let $\Lambda$ be a finite alphabet and $\mathbf{T}$ a vector tree of infinite height. Also let $\{v_\ttt:\ttt\in\mathbf{T}\}$ be a collection of symbols disjoint from $\Lambda$. By $W^\infty(\Lambda,\mathbf{T})$ we denote the set of all maps from $\otimes\mathbf{T}$ into $\Lambda$. We endow the set $\Lambda$ with the discrete topology and the
set $\w^\infty(\Lambda,\mathbf{T})$ with the product topology. A subspace $Q$ of $W^\infty(\Lambda,\mathbf{T})$ is a map from $\otimes\mathbf{T}$ into
$\Lambda\cup\{v_\ttt:\ttt\in\otimes\mathbf{T}\}$ such that there exists a vector subset $\mathbf{D}$ of $\mathbf{T}$ satisfying the following.
\begin{enumerate}
  \item[(i)] $\mathbf{D}$ is dense %dominating
  in $\mathbf{T}$.
  \item[(ii)] For every $\ttt$ in $\otimes\mathbf{T}$, we have that $v_\ttt$ belongs to the range of $Q$ if and only if $\ttt$ belongs to $\otimes\mathbf{D}$.
  \item[(iii)] For every $\ttt$ in $\otimes\mathbf{D}$, the set $Q^{-1}(\{v_\ttt\})$ is finite having a minimum which is equal to $\ttt$.
  \item[(iv)] For every $\ttt$ and $\ttt'$ in $\otimes\mathbf{D}$ with $\ell_{\otimes\mathbf{T}}(\ttt)=\ell_{\otimes\mathbf{T}}(\ttt')$, we have that \[L_{\otimes\mathbf{T}}\big(Q^{-1}(v_\ttt)\big)=L_{\otimes\mathbf{T}}\big(Q^{-1}(v_{\ttt'})\big).\]
  \item[(v)] For every $\ttt$ and $\ttt'$ in $\otimes\mathbf{D}$ with $\ell_{\otimes\mathbf{T}}(\ttt)<\ell_{\otimes\mathbf{T}}(\ttt')$, we have that
      \[\max L_{\otimes\mathbf{T}}\big(Q^{-1}(v_\ttt)\big)< \min L_{\otimes\mathbf{T}}\big(Q^{-1}(v_{\ttt'})\big).\]
\end{enumerate}
For every subspace $Q$ of $\w^\infty(\Lambda,\mathbf{T})$, we denote the corresponding vector subset $\mathbf{D}$ by $\mathbf{D}(Q)$. Let us observe that $Q$ is a subspace of $\w^\infty(\Lambda,\mathbf{T})$ if and only if there exists a subspace $X=(f_n)_{n=0}^\infty$ of $\w(\Lambda,\mathbf{T})$ such that $Q=\cup X$, where $\cup X=\bigcup_{n=0}^\infty f_n$. Although for a subspace $Q$ of $\w^\infty(\Lambda,\mathbf{T})$ the subspace $X$ of $\w(\Lambda,\mathbf{T})$ satisfying $Q=\cup X$ is not unique, we have that for every such $X$ the set $\big\{\bigcup_{n=0}^\infty g_n:(g_n)_{n=0}^\infty\in[X]^{[\infty]}_\Lambda\big\}$ is the same.
For a subspace $Q$ of $\w^\infty(\Lambda,\mathbf{T})$, we define
\[[Q]_\Lambda=\Big\{\bigcup_{n=0}^\infty g_n:(g_n)_{n=0}^\infty\in[X]^{[\infty]}_\Lambda\Big\},\]
where $X$ is a subspace of $\w(\Lambda,\mathbf{T})$ satisfying $Q=\cup X$. Finally,
if $Q$ and $Q'$ are subspaces of $\w^\infty(\Lambda,\mathbf{T})$, we say that $Q'$ is a further subspace of $Q$, we write $Q'\mik Q$, if $[Q']_\Lambda$ is a subset of $[Q]_\Lambda$. By Theorem
\ref{tree_Hales_Jewett_infinite} we have the following immediate consequence.
\begin{cor}
  \label{tree_Hales_Jewett_infinite_cor}
  Let $\Lambda$ be a finite alphabet and $\mathbf{T}$ a vector tree of infinite height. Then for every finite Souslin measurable coloring of the set $\w^\infty(\Lambda,\mathbf{T})$ and every subspace $Q$ of $\w^\infty(\Lambda,\mathbf{T})$ there exists a subspace $Q'$ of $\w^\infty(\Lambda,\mathbf{T})$ with $Q'\mik Q$ such that the set $[Q']_\Lambda$ is monochromatic.
\end{cor}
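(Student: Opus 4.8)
The plan is to reduce the statement to the already-established infinite-dimensional Hales--Jewett theorem, Theorem \ref{tree_Hales_Jewett_infinite}, by transporting the given coloring along the union map. Concretely, I would define
\[\Phi:\w^{[\infty]}(\Lambda,\mathbf{T})\to\w^\infty(\Lambda,\mathbf{T}),\qquad \Phi\big((f_n)_{n=0}^\infty\big)=\bigcup_{n=0}^\infty f_n.\]
First I would check that $\Phi$ is well defined: since each $f_n$ is nonempty we have $\bt(f_n)<\tp(f_n)$, and compatibility together with $\bt(f_0)=0$ forces $\tp(f_n)=\bt(f_{n+1})$, so the intervals $[\bt(f_n),\tp(f_n))$ tile $\omega$ and $\bigcup_n f_n$ is a genuine constant word defined on all of $\otimes\mathbf{T}$ with values in $\Lambda$.

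The key technical point is that $\Phi$ is continuous, so that the pullback $\tilde c=c\circ\Phi$ of a Souslin measurable coloring $c$ of $\w^\infty(\Lambda,\mathbf{T})$ is a Souslin measurable coloring of $\w^{[\infty]}(\Lambda,\mathbf{T})$. For continuity it suffices to treat a subbasic clopen set $\{h:h(\ttt)=\alpha\}$ of $\w^\infty(\Lambda,\mathbf{T})$: for any $\mathbf{f}=(f_n)_n$ in its $\Phi$-preimage, the value $\big(\bigcup_n f_n\big)(\ttt)$ is already determined by the finite initial segment $f_0,\dots,f_m$, where $m$ is the unique index with $\ell_{\otimes\mathbf{T}}(\ttt)\in[\bt(f_m),\tp(f_m))$; hence the basic cylinder fixing $f_0,\dots,f_m$ stays inside the preimage, which is therefore open. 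Souslin measurability is then preserved under $\Phi^{-1}$ by the standard fact that $\{B:\Phi^{-1}(B)\in\mathcal{SM}(\w^{[\infty]}(\Lambda,\mathbf{T}))\}$ is an algebra containing the open sets and closed under the Souslin operation (since $\Phi^{-1}$ commutes with complements, countable unions and countable intersections), and hence contains all of $\mathcal{SM}(\w^\infty(\Lambda,\mathbf{T}))$.

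With these preparations the result follows quickly. Given the subspace $Q$, choose a subspace $X$ of $\w(\Lambda,\mathbf{T})$ with $Q=\cup X$, as guaranteed by the discussion preceding the statement, and apply Theorem \ref{tree_Hales_Jewett_infinite} to the coloring $\tilde c$ and $X$, producing $X'\mik X$ with $[X']^{[\infty]}_\Lambda$ $\tilde c$-monochromatic. I would then set $Q'=\cup X'$. Since $X'\mik X$ gives $[X']_\Lambda\subseteq[X]_\Lambda$, and membership in the infinite span is tested through finite partial unions, we get $[X']^{[\infty]}_\Lambda\subseteq[X]^{[\infty]}_\Lambda$; applying $\Phi$ yields $[Q']_\Lambda=\Phi\big([X']^{[\infty]}_\Lambda\big)\subseteq\Phi\big([X]^{[\infty]}_\Lambda\big)=[Q]_\Lambda$, so $Q'\mik Q$. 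Because $\tilde c=c\circ\Phi$ is constant on $[X']^{[\infty]}_\Lambda$, the color $c$ is constant on $\Phi\big([X']^{[\infty]}_\Lambda\big)=[Q']_\Lambda$, which is exactly what is required. The only real obstacle is the measurability bookkeeping of the previous paragraph---verifying continuity of $\Phi$ and the stability of $\mathcal{SM}$ under continuous preimages---while the entire combinatorial content is already contained in Theorem \ref{tree_Hales_Jewett_infinite}.
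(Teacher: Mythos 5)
Your proposal is correct and follows exactly the route the paper intends: the paper states the corollary as an immediate consequence of Theorem \ref{tree_Hales_Jewett_infinite} via the identification $Q=\cup X$ and the definition $[Q]_\Lambda=\big\{\bigcup_n g_n:(g_n)_n\in[X]^{[\infty]}_\Lambda\big\}$, which is precisely your union map $\Phi$. You simply spell out the continuity of $\Phi$ and the stability of Souslin measurability under continuous preimages, details the paper leaves implicit.
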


Our second consequence of Theorem \ref{tree_Hales_Jewett_infinite} concerns maps defined on a
dense %dominating
vector subset of $\mathbf{T}$.
This is the result that will be needed for the proof of our main result. In order to state it, we need some additional
notation. Let $\mathbf{D}$ be a dense %dominating
vector subset of $\mathbf{T}$. We set $\w^\infty(\Lambda,\mathbf{T},\mathbf{D})$ to be the set of all maps from $\otimes\mathbf{D}$ into $\Lambda$.
The elements of $\w^\infty(\Lambda,\mathbf{T},\mathbf{D})$ can be viewed as restrictions of the elements in $\w^\infty(\Lambda,\mathbf{T},)$ on $\otimes\mathbf{D}$.
We endow $\Lambda$ with the discrete topology and $\w^\infty(\Lambda,\mathbf{T},\mathbf{D})$ with the product topology.
A map $F$ from
$\otimes\mathbf{D}$ into $\Lambda\cup\{v_\ttt:\ttt\in\otimes\mathbf{T}\}$ is called a subspace of $\w^\infty(\Lambda,\mathbf{T},\mathbf{D})$ if there exists a subspace $Q$ of $\w^\infty(\Lambda,\mathbf{T})$ such that $F=Q\upharpoonright\otimes\mathbf{D}$ and $\mathbf{D}(Q)$ is a vector subset of $\mathbf{D}$.
Observe that for every subspace $F$ of
$\w^\infty(\Lambda,\mathbf{T},\mathbf{D})$ and subspaces $Q,Q'$ of $\w^\infty(\Lambda,\mathbf{T},\mathbf{D})$ such that $F=Q\upharpoonright\otimes\mathbf{D}=Q'\upharpoonright\otimes\mathbf{D}$ and both $\mathbf{D}(Q), \mathbf{D}(Q')$ are vector subsets of $\mathbf{D}$ we have that $\mathbf{D}(Q)=\mathbf{D}(Q')$ and
\[\big\{A\upharpoonright\otimes\mathbf{D}:\;A\in[Q]_\Lambda\big\}=\big\{A\upharpoonright\otimes\mathbf{D}:\;A\in[Q']_\Lambda\big\}.\]
For every subspace $F$ of
$\w^\infty(\Lambda,\mathbf{T},\mathbf{D})$ we define $\mathbf{D}(F)=\mathbf{D}(Q)$ and
\[[F]_\Lambda=\big\{A\upharpoonright\otimes\mathbf{D}:\;A\in[Q]_\Lambda\big\},\]
where $Q$ is a subspace of $\w^\infty(\Lambda,\mathbf{T},\mathbf{D})$ such that $F=Q\upharpoonright\otimes\mathbf{D}$ and $\mathbf{D}(Q)$ is a vector subset of $\mathbf{D}$.
Finally,
if $F$ and $F'$ are subspaces of $\w^\infty(\Lambda,\mathbf{T},\mathbf{D})$, we say that $F'$ is a further subspace of $F$, we write $F'\mik F$, if $[F']_\Lambda$ is a subset of $[F]_\Lambda$. By Corollary
\ref{tree_Hales_Jewett_infinite_cor}, we have the following immediate consequence.
\begin{cor}
  \label{tree_Hales_Jewett_infinite_dominating}
  Let $\Lambda$ be a finite alphabet and $\mathbf{T}$ a vector tree of infinite height.
  Also let $\mathbf{D}$ be a dominating vector subset of $\mathbf{T}$.
  Then for every finite Souslin measurable coloring of the set $\w^\infty(\Lambda,\mathbf{T},\mathbf{D})$ and every subspace $F$ of $\w^\infty(\Lambda,\mathbf{T},\mathbf{D})$ there exists a subspace $F'$ of $\w^\infty(\Lambda,\mathbf{T},\mathbf{D})$ with $F'\mik F$ such that the set $[F']_\Lambda$ is monochromatic.
\end{cor}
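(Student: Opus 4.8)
The plan is to derive this directly from Corollary \ref{tree_Hales_Jewett_infinite_cor} by transferring the coloring and the subspaces along the coordinate restriction map. First I would introduce the map $\rho\colon\w^\infty(\Lambda,\mathbf{T})\to\w^\infty(\Lambda,\mathbf{T},\mathbf{D})$ defined by $\rho(A)=A\upharpoonright\otimes\mathbf{D}$. Since $\otimes\mathbf{D}$ is a subset of $\otimes\mathbf{T}$, this $\rho$ is simply the projection onto the coordinates indexed by $\otimes\mathbf{D}$, hence continuous and surjective once both spaces carry the product topology of the discrete topology on $\Lambda$. With this map in hand, the definitions recorded just before the corollary can be rephrased as $[F]_\Lambda=\rho\big([Q]_\Lambda\big)$ whenever $Q$ is a subspace of $\w^\infty(\Lambda,\mathbf{T})$ witnessing $F$, that is, $F=Q\upharpoonright\otimes\mathbf{D}$ with $\mathbf{D}(Q)$ a vector subset of $\mathbf{D}$. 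I would use throughout the already-established fact that $[F]_\Lambda$ and $\mathbf{D}(F)$ do not depend on the choice of such a witness $Q$, which is what makes the restricted objects well defined.

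Given a finite Souslin measurable coloring $c$ of $\w^\infty(\Lambda,\mathbf{T},\mathbf{D})$, I would pull it back to the coloring $\tilde{c}=c\circ\rho$ of $\w^\infty(\Lambda,\mathbf{T})$. The one point that needs a short argument is that $\tilde c$ is again Souslin measurable; equivalently, that $\rho^{-1}$ maps $\mathcal{SM}\big(\w^\infty(\Lambda,\mathbf{T},\mathbf{D})\big)$ into $\mathcal{SM}\big(\w^\infty(\Lambda,\mathbf{T})\big)$. This follows from a standard closure argument: the family $\{B:\rho^{-1}(B)\in\mathcal{SM}(\w^\infty(\Lambda,\mathbf{T}))\}$ is an algebra, because taking preimages commutes with complements and finite unions; it contains every open set, because $\rho$ is continuous; and it is closed under the Souslin operation, because preimages commute with the countable unions and intersections defining that operation. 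By the minimality of $\mathcal{SM}$ it therefore contains every Souslin measurable subset of $\w^\infty(\Lambda,\mathbf{T},\mathbf{D})$, so $\tilde c$ is Souslin measurable.

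Next I would fix a witness $Q$ for the given subspace $F$ and apply Corollary \ref{tree_Hales_Jewett_infinite_cor} to the coloring $\tilde c$ and the subspace $Q$, obtaining a subspace $Q'\mik Q$ with $[Q']_\Lambda$ monochromatic for $\tilde c$, say of color $j$. Setting $F'=Q'\upharpoonright\otimes\mathbf{D}$, I would then verify the three required properties. Since $Q'\mik Q$, the support $\mathbf{D}(Q')$ is a vector subset of $\mathbf{D}(Q)$, which in turn is a vector subset of $\mathbf{D}$; hence $\mathbf{D}(Q')$ is a vector subset of $\mathbf{D}$, so $Q'$ witnesses a genuine subspace $F'$ of $\w^\infty(\Lambda,\mathbf{T},\mathbf{D})$ and $[F']_\Lambda=\rho\big([Q']_\Lambda\big)$. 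From $[Q']_\Lambda\subseteq[Q]_\Lambda$ I get $[F']_\Lambda\subseteq[F]_\Lambda$, i.e. $F'\mik F$. Finally, every $B\in[F']_\Lambda$ has the form $B=\rho(A)$ for some $A\in[Q']_\Lambda$, whence $c(B)=\tilde c(A)=j$, so $[F']_\Lambda$ is $c$-monochromatic.

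Overall the argument is bookkeeping once the projection $\rho$ is isolated, which is why the corollary is billed as immediate. I do not expect a genuine obstacle: the only substantive ingredient is the invariance of Souslin measurability under $\rho^{-1}$, and that is a routine closure argument. The main thing to be careful about in the write-up is keeping the span notations for the three spaces $\w(\Lambda,\mathbf{T})$, $\w^\infty(\Lambda,\mathbf{T})$ and $\w^\infty(\Lambda,\mathbf{T},\mathbf{D})$ straight, and invoking the well-definedness of $[\cdot]_\Lambda$ and $\mathbf{D}(\cdot)$ at the two places where a witnessing subspace is chosen.
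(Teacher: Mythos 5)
Your argument is correct and is precisely the transfer-along-the-restriction-map argument that the paper leaves implicit when it calls the corollary an ``immediate consequence'' of Corollary \ref{tree_Hales_Jewett_infinite_cor}; the well-definedness of $[F]_\Lambda$ and $\mathbf{D}(F)$ and the closure argument for Souslin measurability under continuous preimages are exactly the points one needs to check. No gap.
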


\section{Disjoint union theorem for trees} \label{section_main_res}
We have developed all the tools needed for the proof of the main result of the present work, that is, Theorem \ref{disjoint_Union_tree}. Before we proceed to its proof, let us recall the relevant notation.
Let $\mathbf{T}$ be a vector tree of infinite height.
Recall that \[\mathcal{U}(\mathbf{T})=\{U\subseteq\otimes\mathbf{T}:\;U\;\text{has a minimum}\}.\]
As we have already mentioned in the introduction the collection $\mathcal{U}(\mathbf{T})$ can be viewed as a subset of the set $\{0,1\}^{\otimes\mathbf{T}}$ of all
functions from $\otimes\mathbf{T}$ into $\{0,1\}$. We endow the set $\{0,1\}^{\otimes\mathbf{T}}$
with the product topology of the discrete topology on $\{0,1\}$. Then
$\mathcal{U}(\mathbf{T})$ forms a closed subset of $\{0,1\}^{\otimes\mathbf{T}}$.

Let $\mathbf{D}$ be a vector subset of $\mathbf{T}$. Recall that a $\mathbf{D}$-subspace of $\mathcal{U}(\mathbf{T})$ is a family $\mathbf{U}=(U_\ttt)_{\ttt\in\otimes\mathbf{D}}$ consisting of pairwise disjoint elements from $\mathcal{U}(\mathbf{T})$ such that $\min U_\ttt=\ttt$ for all $\ttt$ in $\otimes\mathbf{D}$, while $\mathbf{U}$ is a subspace of $\mathcal{U}(\mathbf{T})$ if it is a $\mathbf{D}$-subspace for some vector subset $\mathbf{D}$ of $\mathbf{T}$, which we denote by $\mathbf{D}(\mathbf{U})$.
For a subspace $\mathbf{U}=(U_\ttt)_{\ttt\in\otimes\mathbf{D}(\mathbf{U})}$
its span is define as
\[[\mathbf{U}]=\Big\{\bigcup_{\ttt\in\Gamma}U_\ttt:\;\Gamma\subseteq\otimes\mathbf{D}(\mathbf{U})\Big\}\cap\mathcal{U}(\mathbf{T}).\]
Finally, recall that if $\mathbf{U}$ and $\mathbf{U}'$ are two subspaces of $\mathcal{U}(\mathbf{T})$, we say that $\mathbf{U}'$ is a further subspace of $\mathbf{U}$, we write $\mathbf{U}'\mik\mathbf{U}$, if $[\mathbf{U}']$ is a subset of $[\mathbf{U}]$. The latter, in particular, implies that $\mathbf{D}(\mathbf{U}')$ is a vector subset of $\mathbf{D}(\mathbf{U})$.

The main tool for the proof of Theorem \ref{disjoint_Union_tree} is the following lemma. To state it we need some additional notation. Let $\mathbf{D}$ be dense %dominating
vector subset of $\mathbf{T}$, $n$ a non-negative integer and $\mathbf{U}=(U_\ttt)_{\ttt\in\otimes\mathbf{D}}$ a $\mathbf{D}$-subspace of $\mathcal{U}(\mathbf{T})$.
We set $\mathbf{U}\upharpoonright n=(U_\ttt)_{\ttt\in\otimes(\mathbf{D}\upharpoonright n)}$.
\begin{lem}
  \label{lem_fix_one_min}
  Let $\mathbf{T}$ be a vector tree of infinite height, $r$ a positive integer and $c:\mathcal{U}(\mathbf{T})\to\{1,...,r\}$ a Souslin measurable coloring. Also let
  $\mathbf{D}$ be a dense %dominating
  vector subset of $\mathbf{T}$ and $\mathbf{U}$ a $\mathbf{D}$-subspace of $\mathcal{U}(\mathbf{T})$. Finally, let  $n$ be a non-negative integer and $\ttt$ in $\otimes\mathbf{D}(n)$.
  Then there exist a dense %dominating
  vector subset $\mathbf{D}'$ of $\mathbf{D}$
  and a $\mathbf{D}'$-subspace $\mathbf{U}'$ of $\mathcal{U}(\mathbf{T})$ satisfying the following.
  \begin{enumerate}
    \item[(i)] $\mathbf{D}'\upharpoonright n+1=\mathbf{D}\upharpoonright n+1$.
    \item[(ii)] $\mathbf{U}'\mik\mathbf{U}$ and $\mathbf{U}'\upharpoonright n=
  \mathbf{U}\upharpoonright n$
    \item[(iii)] $c(U)=c(U')$ for all $U,U'$ in $[\mathbf{U}']$ with $\min U=\min U'=\ttt$.
  \end{enumerate}
\end{lem}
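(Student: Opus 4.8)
The plan is to reduce the statement to the Hales--Jewett type result for dense vector subsets, namely Corollary \ref{tree_Hales_Jewett_infinite_dominating}, applied above the node $\ttt$ with the binary alphabet $\Lambda=\{0,1\}$. First I would localize above $\ttt$. Writing $\ttt=(t_1,\dots,t_d)$, set $\mathbf{T}_\ttt=(\suc_{T_1}(t_1),\dots,\suc_{T_d}(t_d))$; this is a vector tree of infinite height with root $\ttt$ and $\otimes\mathbf{T}_\ttt=\suc_{\otimes\mathbf{T}}(\ttt)$. Let $\mathbf{E}=(D_1\cap\suc_{T_1}(t_1),\dots,D_d\cap\suc_{T_d}(t_d))$; since $\mathbf{D}$ is dense it is in particular $\ttt$-dense, so $\mathbf{E}$ is a dense vector subset of $\mathbf{T}_\ttt$, and $\otimes\mathbf{E}=\{\mathbf{p}\in\otimes\mathbf{D}:\mathbf{p}\meg_{\otimes\mathbf{T}}\ttt\}$ indexes exactly the blocks $U_\mathbf{p}$ of $\mathbf{U}$ lying above $\ttt$. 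The key observation is that, because the $U_\mathbf{p}$ are pairwise disjoint with $\min U_\mathbf{p}=\mathbf{p}\meg\ttt$, a union $\bigcup_{\mathbf{p}\in\Gamma}U_\mathbf{p}$ has minimum $\ttt$ exactly when $\ttt\in\Gamma\subseteq\otimes\mathbf{E}$; hence the elements of any span with minimum $\ttt$ are parametrized by the $\{0,1\}$-valued maps on $\otimes\mathbf{E}$. I would therefore define a coloring $c'$ of $\w^\infty(\{0,1\},\mathbf{T}_\ttt,\mathbf{E})$ by
\[c'(A)=c\Big(U_\ttt\cup\bigcup_{\mathbf{p}\in\otimes\mathbf{E},\,A(\mathbf{p})=1}U_\mathbf{p}\Big).\]
The union always has minimum $\ttt$, so $c'$ is well defined, and since the $U_\mathbf{p}$ are pairwise disjoint the map $A\mapsto U_\ttt\cup\bigcup_{A(\mathbf{p})=1}U_\mathbf{p}$ is continuous from $\{0,1\}^{\otimes\mathbf{E}}$ into $\{0,1\}^{\otimes\mathbf{T}}$; thus $c'$ is Souslin measurable, being a composition of a continuous map with the Souslin measurable $c$.

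Next I would apply Corollary \ref{tree_Hales_Jewett_infinite_dominating} to $c'$ and the full subspace of $\w^\infty(\{0,1\},\mathbf{T}_\ttt,\mathbf{E})$ to obtain a subspace $F'$ whose span $[F']_{\{0,1\}}$ is $c'$-monochromatic, say with color $j$. Let $\mathbf{E}'=\mathbf{D}(F')$, a dense vector subset of $\mathbf{T}_\ttt$ with $\mathbf{E}'\subseteq\mathbf{E}$, and let $Q'$ be the associated subspace of $\w^\infty(\{0,1\},\mathbf{T}_\ttt)$, with variable blocks $B_\sss=Q'^{-1}(v_\sss)\cap\otimes\mathbf{E}$ for $\sss\in\otimes\mathbf{E}'$ and fixed set $P=\{\mathbf{p}\in\otimes\mathbf{E}:F'(\mathbf{p})\in\{0,1\}\}$, split into $P_1,P_0$ according to the fixed letter.

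Finally I would splice this back into a subspace of $\mathcal{U}(\mathbf{T})$. For $\mathbf{D}'$ I keep $\mathbf{D}$ unchanged up to and including level $n$, use $\mathbf{E}'$ above $\ttt$, and beside $\ttt$ retain the nodes of $\mathbf{D}$ lying at the levels of $\mathbf{E}'$; for $\mathbf{U}'$ I keep the blocks $U_\sss$ for $\sss$ below level $n$ or not above $\ttt$, put $U'_\sss=\bigcup_{\mathbf{p}\in B_\sss}U_\mathbf{p}$ for $\sss\in\otimes\mathbf{E}'$, and let the base block $U'_\ttt=U_\ttt\cup\bigcup_{\mathbf{p}\in P_1}U_\mathbf{p}$ absorb the fixed-$1$ positions while the blocks indexed by $P_0$ are discarded. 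With these choices $\min U'_\sss=\sss$, the blocks are pairwise disjoint, $\mathbf{U}'\mik\mathbf{U}$, and (i),(ii) hold by construction. A direct computation then shows that for $\Gamma=\{\ttt\}\cup\Gamma'$ with $\Gamma'\subseteq\otimes\mathbf{E}'$ the element $\bigcup_{\sss\in\Gamma}U'_\sss$ equals $U_\ttt\cup\bigcup_{A(\mathbf{p})=1}U_\mathbf{p}$ for the map $A\in[F']_{\{0,1\}}$ with $a_\sss=1\iff\sss\in\Gamma'$; since every element of $[\mathbf{U}']$ with minimum $\ttt$ arises this way, (iii) follows with common color $j$.

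The main obstacle I expect is the last paragraph: verifying that the spliced $\mathbf{D}'$ is genuinely a dense vector subset of $\mathbf{T}$ (the level sets of its components must coincide and the retained beside-$\ttt$ part must still dominate each level after being thinned to the sparser level set $L_{\mathbf{T}_\ttt}(\mathbf{E}')$), and checking that absorbing exactly the fixed-$1$ positions into $U'_\ttt$ turns the correspondence between the minimum-$\ttt$ elements of $[\mathbf{U}']$ and $[F']_{\{0,1\}}$ into an exact, color-preserving bijection. The reduction above $\ttt$ and the Souslin measurability of $c'$ are the routine parts; the bookkeeping in the splicing is where the care is needed.
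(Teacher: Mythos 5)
Your proposal is correct and follows essentially the same route as the paper: the paper also localizes to $\mathbf{T}'=(\suc_{T_1}(t_1),\dots,\suc_{T_d}(t_d))$ with $\mathbf{D}'=(D_i\cap T_i')\setminus\{t_i\}$, codes the minimum-$\ttt$ elements of $[\mathbf{U}]$ by the bijection $Q(f)=U_\ttt\cup\bigcup_{\sss\in f^{-1}(\{1\})}U_\sss$ onto $\w^\infty(\{0,1\},\mathbf{T}',\mathbf{D}')$, applies Corollary \ref{tree_Hales_Jewett_infinite_dominating} to the pulled-back coloring, and splices the resulting subspace $F$ back by absorbing the fixed-$1$ positions into $U'_\ttt$, setting $U'_\sss=\bigcup_{\sss'\in F^{-1}(v_\sss)}U_{\sss'}$ above $\ttt$, and keeping $\mathbf{U}$ unchanged elsewhere. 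The bookkeeping you flag as the delicate point is treated equally briefly in the paper ("pick any dominating vector subset $\mathbf{D}'$ \dots it is easy to check"), so there is no substantive divergence.
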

\begin{proof}
  Let $\mathbf{T}=(T_1,...,T_d)$ and $\mathbf{D}=(D_1,...,D_d)$. Also let $\ttt=(t_1,...,t_d)$ and $\mathbf{U}=(U_\sss)_{\sss\in\otimes\mathbf{D}}$. For every $1\mik i\mik d$ we set $T'_i=\{t'\in T_i:t_i\mik_{T_i}t' \}$ and $D'_i=(D_i\cap T'_i)\setminus\{t_i\}$.
  We also set $\mathbf{T}'=(T'_1,...,T'_d)$ and $\mathbf{D}'=(D'_1,...,D'_d)$. Finally, we set
  \[\mathcal{A}=\{U\in\mathcal{U}(\mathbf{T}'):\;\min U=\ttt\}\cap[\mathbf{U}].\]
  Observe that $\mathcal{A}=\{U\in[\mathbf{U}]:\;\min U=\ttt\}$ and, in particular, that $\mathcal{A}$ forms a closed subset of $\mathcal{U}(\mathbf{T})$. Thus the restriction of $c$ on $\mathcal{A}$ is Souslin measurable. We set $\Lambda=\{0,1\}$ and we define a map $Q:\w^\infty(\Lambda,\mathbf{T}',\mathbf{D}')\to\mathcal{A}$ as follows. For every $f$ in $\w^\infty(\Lambda,\mathbf{T}',\mathbf{D}')$, we set
  \[Q(f)=U_\ttt\cup\Big(\bigcup_{\sss\in f^{-1}\big(\{1\}\big)}U_\sss\Big).\]
  It is easy to see that $Q$ is 1-1, onto and continuous. Let $c^*:\w^\infty(\Lambda,\mathbf{T}',\mathbf{D}')\to\{1,...,r\}$ be defined by $c^*(f)=c(Q(f))$
  for all $f\in\w^\infty(\Lambda,\mathbf{T}',\mathbf{D}')$. Then we have that $c^*$ is Souslin measurable. By Corollary \ref{tree_Hales_Jewett_infinite_dominating}, there exists a subspace $F$ of $\w^\infty(\Lambda,\mathbf{T}',\mathbf{D}')$ such that the set $[F]_\Lambda$ is $c^*$-monochromatic.
  We set \[U'_\ttt=U_\ttt\cup\Big(\bigcup_{\sss\in Q^{-1}(\{1\})}U_\sss\Big)\] and for every $\sss\in\otimes\mathbf{D}(F)$ we set
  \[U'_\sss=\bigcup_{\sss'\in F^{-1}(\{v_\sss\})}U_{\sss'}.\]
  By the definition of the coloring $c^*$ and the choice of $F$, we have that the set
  \[\mathcal{B}=\big\{U\in\big[(U'_\sss)_{\sss\in\{\ttt\}\cup\otimes\mathbf{D}(F)}\big]:\;\min U=\ttt\big\}\]
  is $c$-monochromatic. Let $\mathbf{D}(F)=(D^*_1,...,D^*_d)$. Pick any dominating vector subset $\mathbf{D}'=(D'_1,...,D'_d)$ of $\mathbf{D}$ such that $\mathbf{D}'\upharpoonright n+1=\mathbf{D}\upharpoonright n+1$ and $D'_i\cap T_i\subseteq D^*_i$ for all $1\mik i\mik d$.
  For every $\sss$ in $\otimes\mathbf{D}'\setminus\otimes\mathbf{D}(F)$ with $\sss\neq\ttt$ we set $U'_\sss=U_\sss$. Setting $\mathbf{U}'=(U'_\sss)_{\sss\in\otimes\mathbf{D}'}$, it is easy to check that $\mathbf{D}'$ and $\mathbf{U}'$ are as desired.
\end{proof}

Since for every non-negative integer $n$ and dense %dominating
vector subset $\mathbf{D}$ of a vector tree $\mathbf{T}$ of infinite height the set $\otimes\mathbf{D}(n)$ is finite, iterating Lemma \ref{lem_fix_one_min}, we obtain the following.

\begin{lem}
  \label{lem_fix_min_full_level}
  Let $\mathbf{T}$ be a vector tree of infinite height, $r$ a positive integer and $c:\mathcal{U}(\mathbf{T})\to\{1,...,r\}$ a Souslin measurable coloring. Also let
  $\mathbf{D}$ be a dense %dominating
  vector subset of $\mathbf{T}$ and $\mathbf{U}$ a $\mathbf{D}$-subspace of $\mathcal{U}(\mathbf{T})$. Finally, let  $n$ be a non-negative integer.
  Then there exist a dense %dominating
  vector subset $\mathbf{D}'$ of $\mathbf{D}$
  and a $\mathbf{D}'$-subspace $\mathbf{U}'$ of $\mathcal{U}(\mathbf{T})$ satisfying the following.
  \begin{enumerate}
    \item[(i)] $\mathbf{D}'\upharpoonright n+1=\mathbf{D}\upharpoonright n+1$.
    \item[(ii)] $\mathbf{U}'\mik\mathbf{U}$ and $\mathbf{U}'\upharpoonright n=
  \mathbf{U}\upharpoonright n$
    \item[(iii)] $c(U)=c(U')$ for all $U,U'$ in $[\mathbf{U}']$ with $\min U=\min U'$ in $\otimes\mathbf{D}'(n)$.
  \end{enumerate}
\end{lem}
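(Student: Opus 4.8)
The plan is to iterate Lemma \ref{lem_fix_one_min} once for each candidate minimum lying at level $n$. The observation that makes the iteration work is that conclusion (i) of Lemma \ref{lem_fix_one_min} freezes the first $n+1$ levels of the dense vector subset: an application returns $\mathbf{D}'$ with $\mathbf{D}'\upharpoonright n+1=\mathbf{D}\upharpoonright n+1$, hence $\mathbf{D}'(n)=\mathbf{D}(n)$ and so $\otimes\mathbf{D}'(n)=\otimes\mathbf{D}(n)$. Thus the finite set of targets we must treat is fixed in advance and never changes during the iteration, and each successive application may legitimately be fed one of these targets.

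First I would enumerate the finite set $\otimes\mathbf{D}(n)$ as $\{\ttt_1,\ldots,\ttt_p\}$ and put $\mathbf{D}_0=\mathbf{D}$, $\mathbf{U}_0=\mathbf{U}$. Proceeding by induction on $j\in\{1,\ldots,p\}$, I would apply Lemma \ref{lem_fix_one_min} to the coloring $c$, the dense vector subset $\mathbf{D}_{j-1}$, the $\mathbf{D}_{j-1}$-subspace $\mathbf{U}_{j-1}$, the integer $n$ and the node $\ttt_j$; this is legitimate since the preceding steps give $\mathbf{D}_{j-1}\upharpoonright n+1=\mathbf{D}\upharpoonright n+1$, so $\ttt_j$ indeed belongs to $\otimes\mathbf{D}_{j-1}(n)$. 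The application yields $\mathbf{D}_j$ and $\mathbf{U}_j$ with $\mathbf{D}_j\upharpoonright n+1=\mathbf{D}_{j-1}\upharpoonright n+1$, $\mathbf{U}_j\mik\mathbf{U}_{j-1}$, $\mathbf{U}_j\upharpoonright n=\mathbf{U}_{j-1}\upharpoonright n$, and with $c$ constant on all $U\in[\mathbf{U}_j]$ satisfying $\min U=\ttt_j$. I would then set $\mathbf{D}'=\mathbf{D}_p$ and $\mathbf{U}'=\mathbf{U}_p$.

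The verification is a telescoping argument. Conclusions (i) and (ii) follow by composing the one-step conclusions: the equalities $\upharpoonright n+1$ and $\upharpoonright n$ chain, and $\mathbf{U}'\mik\mathbf{U}$ follows from transitivity of $\mik$. For conclusion (iii) the decisive point is that the spans only shrink along the iteration, i.e. $[\mathbf{U}_p]\subseteq\cdots\subseteq[\mathbf{U}_{j+1}]\subseteq[\mathbf{U}_j]$ for every $j$. Given $U,U'\in[\mathbf{U}']$ with $\min U=\min U'=\ttt\in\otimes\mathbf{D}'(n)$, I would first note that conclusion (i) forces $\otimes\mathbf{D}'(n)=\otimes\mathbf{D}(n)$, so $\ttt=\ttt_j$ for some $j$; then $U,U'$ both lie in $[\mathbf{U}_j]$, and the color-constancy secured at step $j$ gives $c(U)=c(U')$.

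The only real content beyond bookkeeping is that the color-constancy established for $\ttt_j$ at step $j$ survives every later application. This is exactly what the monotonicity $[\mathbf{U}_p]\subseteq[\mathbf{U}_j]$ supplies: a subsequent step may refine the subspace further, but it can never enlarge the span, so a property of the form \emph{``$c$ is constant on the unions in the span with a prescribed minimum''} is automatically inherited downward. Combined with the invariance of the level-$n$ data under conclusion (i), this shows that after all $p$ steps every minimum in $\otimes\mathbf{D}'(n)$ has been treated, which is precisely conclusion (iii).
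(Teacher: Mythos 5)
Your proof is correct and is exactly the argument the paper intends: the paper proves this lemma by simply remarking that $\otimes\mathbf{D}(n)$ is finite and that one iterates Lemma \ref{lem_fix_one_min}, and your write-up supplies precisely that iteration, including the two points that make it work (conclusion (i) freezing $\mathbf{D}\upharpoonright n+1$ so the target set is fixed, and monotonicity of spans preserving the color-constancy from earlier steps).
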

The next lemma achieves a canonicalization of the coloring with respect to the minimum.
\begin{lem}
  \label{lem_fix_min_full_tree}
  Let $\mathbf{T}$ be a vector tree of infinite height, $r$ a positive integer and $c:\mathcal{U}(\mathbf{T})\to\{1,...,r\}$ a Souslin measurable coloring. Also let
  $\mathbf{D}$ be a dense %dominating
  vector subset of $\mathbf{T}$ and $\mathbf{U}$ a $\mathbf{D}$-subspace of $\mathcal{U}(\mathbf{T})$.
  Then there exists a subspace $\mathbf{U}'$ of $\mathcal{U}(\mathbf{T})$ with $\mathbf{U}'\mik\mathbf{U}$ such that $\mathbf{D}(\mathbf{U}')$ is a dense %dominating
  vector subset of $\mathbf{T}$
  and
  $c(U)=c(U')$ for all $U,U'$ in $[\mathbf{U}']$ with $\min U=\min U'$.
\end{lem}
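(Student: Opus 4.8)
The plan is to iterate Lemma \ref{lem_fix_min_full_level} across all levels and pass to a fusion. First I would build, by recursion on $n$, a sequence of dense vector subsets $\mathbf{D}_0,\mathbf{D}_1,\dots$ with $\mathbf{D}_0=\mathbf{D}$ and a sequence of subspaces $\mathbf{U}_0,\mathbf{U}_1,\dots$ with $\mathbf{U}_0=\mathbf{U}$, where each $\mathbf{U}_n=(U^n_\ttt)_{\ttt\in\otimes\mathbf{D}_n}$ is a $\mathbf{D}_n$-subspace, as follows: given $\mathbf{D}_n$ and $\mathbf{U}_n$, apply Lemma \ref{lem_fix_min_full_level} to $c$, to the pair $(\mathbf{D}_n,\mathbf{U}_n)$, and to the integer $n$, obtaining $\mathbf{D}_{n+1}$ and $\mathbf{U}_{n+1}$ with $\mathbf{D}_{n+1}\upharpoonright n+1=\mathbf{D}_n\upharpoonright n+1$, with $\mathbf{U}_{n+1}\mik\mathbf{U}_n$ and $\mathbf{U}_{n+1}\upharpoonright n=\mathbf{U}_n\upharpoonright n$, and such that the colour of any element of $[\mathbf{U}_{n+1}]$ whose minimum lies in $\otimes\mathbf{D}_{n+1}(n)$ depends only on that minimum.

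Because the $n$-th application freezes the first $n+1$ levels of the vector subset and the first $n$ levels of the subspace, for each fixed $k$ the level $\mathbf{D}_m(k)$ and the blocks $U^m_\ttt$ with $\ttt$ at level $k$ are constant for all large $m$. I would therefore define the fusion $\mathbf{D}'$ by $\mathbf{D}'(k)=\mathbf{D}_m(k)$ and $\mathbf{U}'=(U'_\ttt)_{\ttt\in\otimes\mathbf{D}'}$ by $U'_\ttt=U^m_\ttt$, taking in each case any sufficiently large $m$. The routine verifications are then: (a) $\mathbf{D}'$ is a dense vector subset of $\mathbf{T}$, since its selected levels stabilise to a single increasing sequence and $\mathbf{D}'(k)$ dominates $\mathbf{T}(k)$ because it equals $\mathbf{D}_m(k)$ for a dense $\mathbf{D}_m$; (b) $\mathbf{U}'$ is a $\mathbf{D}'$-subspace, because any two of its blocks $U'_\ttt,U'_\sss$ are distinct blocks of a genuine $\mathbf{D}_m$-subspace $\mathbf{U}_m$ for $m$ large enough and are hence pairwise disjoint, while $\min U'_\ttt=\ttt$ already holds in $\mathbf{U}_m$; and (c) $\mathbf{U}'\mik\mathbf{U}_m$ for every $m$, since each block $U'_\ttt$ lies in $[\mathbf{U}_m]$ (it is a block of some $\mathbf{U}_{m'}\mik\mathbf{U}_m$, hence a union of $\mathbf{U}_m$-blocks), whence $[\mathbf{U}']\subseteq[\mathbf{U}_m]$. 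In particular $\mathbf{U}'\mik\mathbf{U}$.

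It remains to check the canonicalisation with respect to the minimum. Given $U,U'$ in $[\mathbf{U}']$ with $\min U=\min U'=\ttt$, let $n$ be the level of $\ttt$, so that $\ttt\in\otimes\mathbf{D}'(n)=\otimes\mathbf{D}_{n+1}(n)$. By (c) we have $U,U'\in[\mathbf{U}_{n+1}]$, and they share the common minimum $\ttt\in\otimes\mathbf{D}_{n+1}(n)$; the defining property of the $n$-th step then yields $c(U)=c(U')$, as required. I expect the main obstacle to be precisely the fusion bookkeeping underlying step (c): each individual application of Lemma \ref{lem_fix_min_full_level} canonicalises the colour only inside the span $[\mathbf{U}_{n+1}]$, so the whole argument hinges on verifying that the diagonal family $\mathbf{U}'$ is at once a legitimate $\mathbf{D}'$-subspace and a further subspace of every $\mathbf{U}_n$. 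Only then do the separate level-by-level canonicalisations amalgamate into a single statement valid inside $[\mathbf{U}']$.
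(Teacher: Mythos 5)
Your proof is correct and follows essentially the same route as the paper: iterate Lemma \ref{lem_fix_min_full_level} level by level, using the freezing clauses $\mathbf{D}_{n+1}\upharpoonright n+1=\mathbf{D}_n\upharpoonright n+1$ and $\mathbf{U}_{n+1}\upharpoonright n=\mathbf{U}_n\upharpoonright n$ to fuse the sequences into a diagonal $\mathbf{D}'$ and $\mathbf{U}'$. Your extra care in verifying that $\mathbf{U}'$ is a genuine $\mathbf{D}'$-subspace with $\mathbf{U}'\mik\mathbf{U}_m$ for every $m$ is exactly the bookkeeping the paper leaves as ``easy to check.''
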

\begin{proof}
  Applying Lemma \ref{lem_fix_min_full_tree}, we inductively construct a sequence $(\mathbf{D}_n)_{n=0}^\infty$ of dense %dominating
  vector subsets of $\mathbf{T}$ with $\mathbf{D}_0=\mathbf{D}$ and a sequence $(\mathbf{U}_n)_{n=0}^\infty$ of subspaces of $\mathcal{U}(\mathbf{T})$ with $\mathbf{U}_0=\mathbf{U}$ satisfying the following for every non-negative integer $n$.
  \begin{enumerate}
    \item[(i)] $\mathbf{D}_{n+1}$ is a vector subset of $\mathbf{D}_n$ with $\mathbf{D}_{n+1}\upharpoonright n+1=\mathbf{D}_n\upharpoonright n+1$.
    \item[(ii)] $\mathbf{U}_{n+1}\mik\mathbf{U}_n$ and $\mathbf{U}_{n+1}\upharpoonright n=
  \mathbf{U}_n\upharpoonright n$
    \item[(iii)] $c(U)=c(U')$ for all $U,U'$ in $[\mathbf{U}_{n+1}]$ with $\min U=\min U'$ in $\otimes\mathbf{D}_{n+1}(n)$.
  \end{enumerate}
  We set $\mathbf{D}'=\bigcup_{n=0}^\infty\mathbf{D}_n(n)$. Moreover, for every non-negative integer $n$ and $\ttt$ in $\otimes\mathbf{D}'(n)$, if $\mathbf{U}_n=(U^n_\sss)_{\sss\in\otimes\mathbf{D}_n}$, we set $U'_\ttt=U^n_\ttt$.
  Setting $\mathbf{U}'=(U'_\sss)_{\sss\in\otimes\mathbf{D}'}$, it is easy to check that $\mathbf{U}'$ is as desired.
\end{proof}

We are ready for the proof of Theorem \ref{disjoint_Union_tree}.
\begin{proof}
  [Proof of Theorem \ref{disjoint_Union_tree}]
  We define a coloring $c:\mathcal{U}(\mathbf{T})\to\{0,1\}$ setting for every $U$ in $\mathcal{U}(\mathbf{T})$
  \[
c(U)=\left\{ \begin{array} {l} 0\;\;\;\;\text{if}\;U\not\in\mathcal{P},\\
1\;\;\;\;\text{if}\;U\in\mathcal{P}.\end{array}  \right.
\]
Since $\mathcal{P}$ is a Souslin measurable subset of $\mathcal{U}(\mathbf{T})$,
we have that $c$ is a Souslin measurable coloring of $\mathcal{U}(\mathbf{T})$.
Applying Lemma \ref{lem_fix_min_full_tree}, we obtain a subspace $\mathbf{U}''$ of
$\mathcal{U}(\mathbf{T})$ with $\mathbf{U}''\mik\mathbf{U}$ such that
$\mathbf{D}(\mathbf{U}'')$ is a dense %dominating
vector subset of $\mathbf{T}$
  and
  \begin{equation}
    \label{eq03}
    c(U)=c(U')
  \end{equation}
  for all $U,U'$ in $[\mathbf{U}'']$ with $\min U=\min U'$. For every $\sss$ in $\otimes\mathbf{D}(\mathbf{U}'')$ we pick an element $U_\sss$ from $[\mathbf{U}'']$
  with $\min U_\sss=\sss$. We define a subset $\mathcal{P}_*$ of $\otimes\mathbf{D}(U'')$ as follows:
  \[\mathcal{P}_*=\{\sss\in\otimes\mathbf{D}(\mathbf{U''}):\;c(U_\sss)=1\}.\]
  By the definition of the coloring $c$ and \eqref{eq03} we have the following property.
  \begin{enumerate}
    \item[($\mathrm{P}$)] For every $U$ in $[\mathbf{U}'']$ we have that $U\in\mathcal{P}$ if and only if $\min U\in\mathcal{P}_*$.
  \end{enumerate}
  Applying Halpern--L\"auchli Theorem, that is, Theorem \ref{Halpern_Lauchli},
  we obtain
  a vector subset $\mathbf{D}'$ of $\mathbf{D}(U'')$ such that either
  \begin{enumerate}
    \item[(i$'$)] $\otimes\mathbf{D}'$ is a subset of $\mathcal{P}_*$ and $\mathbf{D}'$ is a dense %dominating
        vector subset of $\mathbf{T}$, or
    \item[(ii$'$)] $\otimes\mathbf{D}'$ is a subset of $\mathcal{P}_*^c$ and $\mathbf{D}'$ is
        a $\ttt$-dense %dominating
        vector subset $\mathbf{D}'$ of $\mathbf{T}$ for some $\ttt$ in $\otimes\mathbf{T}$.
  \end{enumerate}
  Let $\mathbf{U}'=(U_\sss)_{\sss\in\otimes\mathbf{D}'}$. Then $\mathbf{U}'\mik\mathbf{U}''\mik\mathbf{U}$ and $\mathbf{D}(\mathbf{U}')=\mathbf{D}'$.
  Thus invoking property ($\mathrm{P}$) we have that either
  \begin{enumerate}
    \item[(i)]  $[\mathbf{U}']$ is a subset of $\mathcal{P}$ and $\mathbf{D}(\mathbf{U}')$ is a dominating vector subset of $\mathbf{T}$, or
    \item[(ii)] $[\mathbf{U}']$ is a subset of $\mathcal{P}^c$ and $\mathbf{D}(\mathbf{U}')$ is a $\ttt$-dominating vector subset of $\mathbf{T}$ for some $\ttt$ in $\otimes\mathbf{T}$.
  \end{enumerate}
  The proof of the theorem is complete.
\end{proof}

\section{Concluding Remarks}

It is natural to ask whether a multidimensional version of Theorem \ref{disjoint_Union_tree} holds true. Of course that would imply a multidimensional version of Halpern-L\"{a}uchli  Theorem, which is true only in its strong tree version, i.e. Milliken's tree Theorem \cite{Mi1, Mi2}. However,
one can easily see that this fails too. In particular, if $T$ is a tree and $\mathcal{A}$ is the collection of all pair $(U_1,U_2)$ of disjoint subset of $T$ having a minimum with $\min U_1\mik_T\min U_2$, then we can define a coloring $c:\mathcal{A}\to\{0,1\}$ setting $c\big((U_1,U_2)\big)$ to be the cardinality of the set $U_1\cap T(\ell_T(\min U_2))$ modulo $2$.
It is easy to see that the span of every subspace indexed by a strong subtree achieves both colors. Actually, obvious modifications of this coloring result the non-existence of a Ramsey degree.

However, a positive result towards this direction can be achieved in the case of finite homogeneous trees and a modified notion of subspaces, namely, subspaces indexed by a skew subtree. This is a subject of a forthcoming paper. The case of infinite trees remains still open.

%-------------------------------------------------------------------%
%                           Bibliography                            %
%-------------------------------------------------------------------%


\begin{thebibliography}{99}
\bibitem{AFK} S. A. Argyros, V. Felouzis and V. Kanellopoulos,
\textit{A proof of Halpern-L\"{a}uchli partition theorem},
European J. Combin. 23 (2002), no. 1, 1–10.

\bibitem{C} T. J. Carlson, \textit{Some unifying principles in Ramsey Theory}, Discrete Math., 68 (1988), 117--169.

\bibitem{CS} T. J. Carlson and S. G. Simpson, \textit{A dual form of Ramsey's theorem}, Adv. Math., 53 (1984), 265--290.

\bibitem{CS2} T. J. Carlson and S. G. Simpson, textit{Topological Ramsey theory. In
Mathematics of Ramsey theory}, volume 5 of Algorithms Combin., pages
172--183. Springer, Berlin, 1990.

\bibitem{GRS}  R.L. Graham, B.L. Rothschild, and J. H. Spencer, \textit{ Ramsey theory. Second edition.} Wiley-Interscience Series in Discrete Mathematics and Optimization. A Wiley-Interscience Publication. John Wiley and Sons, Inc., New York, 1990. xii+196 pp.

\bibitem{FK}  H. Furstenberg and Y. Katznelson, \textit{Idempotents in compact semigroups and Ramsey Theory}, Israel J. Math. 68 (1989), 257--270.

\bibitem{HJ} A. H. Hales and R. I. Jewett, \textit{Regularity and positional games}, Trans. Amer. Math. Soc., 106 (1963), 222--229.

\bibitem{HL} J. D. Halpern and H. L\"{a}uchli, \textit{A partition theorem}, Trans. Amer. Math. Soc., 124 (1966), 360--367.

\bibitem{K}  N. Karagiannis, \textit{A combinatorial proof of an infinite version of the Hales-Jewett theorem}, J. Comb. 4 (2013), no. 2, 273--291.

\bibitem{Mi1} K. Milliken, \textit{A Ramsey theorem for trees}, J. Comb. Theory Ser. A, 26 (1979), 215-237.

\bibitem{Mi2} K. Milliken, \textit{A partition theorem for the infinite subtrees of a tree}, Trans. Amer. Math. Soc., 263 (1981), 137-148.

\bibitem{To} S. Todorcevic, \textit{Introduction to Ramsey Spaces}, Annals Math. Studies, No. 174, Princeton Univ. Press, 2010.

\end{thebibliography}
\end{document}